\numberwithin{equation}{section} 
\newcommand{\Nb}{{\mathbb{N}}}
\newcommand{\Rb}{{\mathbb{R}}}
\newcommand{\LL}{{\mathcal{L}}}
\def\rightharpoonupfill@{\arrowfill@\relbar\relbar\rightharpoonup}
\newcommand{\xrightharpoonup}[2][]{\ext@arrow
0359\rightharpoonupfill@{#1}{#2}} \makeatother
\def\dist{\text{dist}}
\def\weakstar{\buildrel\star\over\rightharpoonup}
\def\eps{{\varepsilon}}
\def\weak{\rightharpoonup}
\newtheorem{theorem}{Theorem}[section]
\newtheorem{lemma}[theorem]{Lemma}
\newtheorem{proposition}[theorem]{Proposition}
\newtheorem{corollary}[theorem]{Corollary}
\newtheorem{remarks}[theorem]{Remarks}
\newtheorem{remark}[theorem]{Remark}
\newtheorem{definition}[theorem]{Definition}
\newcommand{{\rr}}{{\mathbb R}}
\newenvironment{@abssec}[1]{%
     \if@twocolumn
       \section*{#1}%
     \else
       \vspace{.05in}\footnotesize
       \parindent .2in
         {\upshape\bfseries #1. }\ignorespaces
     \fi}
     {\if@twocolumn\else\par\vspace{.1in}\fi}
\begin{document}

\title{\sc An optimal design problem with non-standard growth and no concentration effects }

\author{{\sc Ana Cristina Barroso}\\
Departamento de Matem\'atica and CMAFcIO \\ 
Faculdade de Ci\^encias da Universidade de Lisboa\\ 
Campo Grande, Edif\' \i cio C6, Piso 1\\
1749-016 Lisboa, Portugal\\
acbarroso@ciencias.ulisboa.pt\\
and \\
{\sc Elvira Zappale} \\
	Dipartimento di Science di Base ed Applicate per l'Ingegneria \\
	Sapienza - Universit\`a di Roma\\
	Via Antonio Scarpa, 16\\
	00161 Roma (RM), Italy\\
elvira.zappale@uniroma1.it}
\maketitle

\begin{abstract}
We obtain an integral representation for certain functionals arising in the context of optimal design and damage evolution problems under non-standard growth conditions and perimeter penalisation. Under our hypotheses, the integral representation includes a term
which is absolutely continuous with respect to the Lebesgue measure and a perimeter term, but no additional singular term.

We also study some dimension reduction problems providing results for the optimal design of thin films.

\end{abstract}

\smallskip

{\bf MSC (2020)}: 49J45, 74K15, 74K35

{\bf Keywords}: non-standard growth 
conditions, optimal design, damage, dimension reduction, thin films, sets of finite perimeter, convexity

\section{Introduction}

In a recent article \cite{BZ}, we investigated the possibility of obtaining a measure representation, in a suitable sense (cf. Definition \ref{measrep}), for two functionals arising in certain relaxation processes for an energy of the type
\begin{equation}\label{F}
F\left(\chi,u\right)  :=\int_{\Omega}\chi\left(x\right) 
W_{1}\left(\nabla u(x)\right)  + \left(1-\chi\left(x\right)\right)  
W_{0}\left(\nabla u(x)\right) \, dx
+\left\vert D\chi\right\vert \left(\Omega\right),
\end{equation}
where $\Omega$ a bounded open subset of $\mathbb R^N$, $\chi \in BV\left(\Omega;\left\{  0,1\right\}\right)$ and 
$u \in W^{1,p}\left(\Omega;\mathbb{R}^{d}\right)$.

This energy has its origin in a problem in optimal design (see \cite{AL, AB, KL, KS1, KS2, KS3, MT}) where the perimeter term is added to ensure compactness, and thus existence, of solutions to the corresponding minimisation problems.
In this setting, the characteristic function $\chi$ corresponds either to one material, say $E \subset \Omega$, of a two components sample $\Omega$, or to one of the phases $E$ of a single material $\Omega$. The stored elastic energy or suitable function of the electrostatic potential density of $E$ is given by $W_1$, while $W_0$ is the energy associated to the other component or phase, and the term $|D\chi|(\Omega)$ penalises the measure of the created interfaces.

Another motivation comes from the modeling of ``brutal damage'', we refer to \cite{FM} where the first rigorous mathematical description was provided and to \cite{FF} for a nonlinear elastic setting in the framework of thin structures. Indeed, damage as an inelastic phenomenon can be described
by means of the characteristic function of the damaged
region which is a subset of $\Omega$, $\nabla u$ is the deformation strain, the elastic energy is given by the sum of the two contributions in the undamaged and damaged part, $W_0>W_1$, and a dissipational energy is taken as proportional, via the constant $\kappa > 0$ which represents the material toughness, to the damaged volume. This latter term corresponds to local cost of damaging a healthy part of the
sample. We refer to the recent paper \cite{BIR} and the bibliography therein for an asymptotic analysis, in the linear elastic case, where the damaged zones tend to disappear.

A regularisation term is added in the form of the total variation of the characteristic function $\chi$. Among the literature, we refer, for instance, to \cite{DMI,I,MR,PMM}, where a similar term is considered in the case where the damage parameter is assumed to range in the entire set $[0,1]$.   
Hence, the total energy contains an extra term with respect to \eqref{F}, and is given by
\begin{equation}\label{Fdam}
F_{d}\left(\chi,u\right)  :=\int_{\Omega}\chi\left(x\right) 
W_{1}\left(\nabla u(x)\right)  + \left(1-\chi\left(x\right)\right)  
W_{0}\left(\nabla u(x)\right) \, dx + \kappa \int_\Omega \chi(x) dx
+\left\vert D\chi\right\vert \left(\Omega\right).
\end{equation}
However, we observe that the extra dissipation term, being linear, does not add any particular difficulty to our analysis. Likewise, the possible addition of suitable boundary conditions or the work done by (linear) bulk loads pose no problems and thus are neglected in our subsequent description.

In the theory of shape optimisation, where the aim is to find an optimal shape minimising a cost functional (here the elastic energy), one should either impose 
directly a volume constraint on the phase where $\chi=1$ or, as in \eqref{Fdam}, the toughness $\kappa$ can be thought of as a Lagrange multiplier associated to a volume term. We also address these issues for the sake of completeness (see \eqref{vol}, Remark \ref{volfrac} below and the final comments after the proof of Proposition \ref{3D2DOgden}).

Letting $f: \{0,1\} \times \mathbb R^{d \times N} \to \mathbb R$
be defined as
\begin{equation}\label{density}
f\left(  b,\xi\right)  :=b  W_1(\xi)+ (1-b)W_0(\xi),
\end{equation}
to simplify the notation, the functionals considered in \cite{BZ} are given by
\begin{align}
\mathcal{F}\left(  \chi,u;A\right)   &  :=\inf\left\{  
\underset{n\rightarrow +\infty}{\lim\inf}\,
F\left(\chi_{n},u_{n};A\right)  :  u_{n}
\in W^{1,q}\left(A;\mathbb{R}^{d}\right),
\chi_{n}
\in BV\left(  A;\left\{  0,1\right\}  \right),  
\right. \label{introrelaxed}\\
&  \hspace{3cm} \left.  u_{n}\rightharpoonup u\text{ in }
W^{1,p}\left(  A;\mathbb{R}^{d}\right),
\chi_{n}\overset{\ast}{\rightharpoonup}\chi\text{ in }
BV\left(A;\left\{  0,1\right\}  \right)  \right\}\nonumber
\end{align}
and
\begin{align}
\mathcal{F}_{\operatorname*{loc}}\left(  \chi,u;A\right)   &  
:=\inf\left\{
\underset{n\rightarrow +\infty}{\lim\inf}\, F\left(  \chi_{n},u_{n};A\right)
: u_{n}  \in W_{\operatorname*{loc}}^{1,q}
\left(A;\mathbb{R}^{d}\right),
 \chi_{n}  \in 
BV\left(A;\left\{0,1\right\}  \right),  \right. \label{introrelaxedloc}\\
&  \hspace{3cm} \left.  u_{n}\rightharpoonup u\text{ in }W^{1,p}
\left(  A;\mathbb{R}^{d}\right), 
\chi_{n}\overset{\ast}{\rightharpoonup}\chi\text{ in }
BV\left(A;\left\{  0,1\right\}  \right)\right\}, \nonumber
\end{align}
where the exponents $p,q$ satisfy  
\begin{equation}\label{pqn}
1<p\leq q< \frac{N p}{N-1}
\end{equation} 
(if $N=1$ we let $1<p\leq q< +\infty$) and where we consider the localisation of \eqref{F} defined, for every open set $A \subset \Omega$ and every 
$(\chi ,u) \in BV(A;\{0,1\})\times W^{1,p}(A;\mathbb R^d)$, by
\begin{equation}
\label{FchiuA}
F(\chi, u; A):= \int_A f(\chi (x), \nabla u(x))\, dx + |D \chi|(A).
\end{equation}
The functions $W_{i}:\mathbb{R}^{d\times N}\rightarrow\mathbb{R}$, $i=0,1$, in
\eqref{density} are assumed to be continuous and satisfy the following growth condition
\begin{equation}\label{growth}
\exists \, \beta > 0 :
0 \leq W_{i}\left(  \xi\right)  \leq
\beta\left(  1+\left\vert \xi\right\vert ^{q}\right),
\; \; \; \forall \xi \in \Rb^{d \times N}.  
\end{equation}

Under the above hypotheses, in \cite{BZ} we showed that there exists a non-negative Radon measure $\mu$ defined
on the open subsets of $\overline{\Omega}$ which weakly represents $\mathcal{F}\left(  \chi,u;\cdot\right)$, whereas
$\mathcal{F}_{\operatorname*{loc}}\left(  \chi,u;\cdot\right)$ admits a strong measure representation (cf. Definition \ref{measrep}). Furthermore, assuming convexity of
$f(b,\cdot)$, $\forall b \in \{0,1\}$, we proved that, for every open subset $A$ of $\Omega$,
\begin{equation}\label{Flocrep}
{\cal F}_{\rm loc}(\chi, u;A) = \int_Af(\chi(x),\nabla u(x)) \, dx + |D \chi|(A)
+ \nu^s(\chi, u;A),
\end{equation}
where $\nu^s$ is a non-negative Radon measure, singular with respect to the Lebesgue measure (cf. Theorems 4.1 and 4.3 in \cite{BZ}).
This additional singular measure arises since in the above functionals
there is a gap between the space of admissible
macroscopic fields $u \in W^{1,p}\left(A;\mathbb{R}^{d}\right)$ and the smaller
space $W^{1,q}\left(A;\mathbb{R}^{d}\right)$ where the growth hypothesis \eqref{growth} ensures boundedness of the energy. Indeed, it is well known that
when no such gap is present, i.e. when $p=q$, and in the case independent of the field $\chi$, then
\begin{align*}
\mathcal{F}(u;A) = & \inf\left\{  \underset{n\rightarrow +\infty}{\liminf}
\int_{A}f(\nabla u_n(x)) \, dx  :  u_{n}
\in W^{1,q}\left(A;\mathbb{R}^{d}\right), 
 u_{n}\rightharpoonup u\text{ in }
W^{1,p}\left(  A;\mathbb{R}^{d}\right) \right\} \\
= & \int_{A}Qf(\nabla u(x)) \, dx, 
\end{align*}
where $Qf$ denotes the quasiconvex envelope of $f$ (see Definition \ref{qcxenv}).

For the range of exponents considered in \eqref{pqn}, similar functionals were
studied in \cite{ABF, BFM, FMy, MM}, the case where 
the integrability exponent $p(x)$ of the admissible fields depends in a continuous or regular piecewise continuous way on the location in the body was addressed in
\cite{CM, Mucci}, we also refer to \cite{HH} for generalisations of such problems in Orlicz type spaces.

In this paper, we expand on our previous results providing a full characterisation of 
$\mathcal{F}\left(  \chi,u;\Omega\right)$ and $\mathcal{F}_{\operatorname*{loc}}\left(  \chi,u;\Omega\right)$, under some hypotheses on $\chi$. We 
also 
assume that the continuous density functions 
$W_{i}:\mathbb{R}^{d\times N}\rightarrow\mathbb{R}$, $i=0,1$, in
\eqref{density} satisfy the 
following stronger 
growth condition
\begin{align}\label{growth2}
\exists \, \beta_1 > 0 : \; 
& 0 \leq W_{1}\left(  \xi\right)  \leq
\beta_1\left(  1+\left\vert \xi\right\vert ^{p}\right),
\; \; \; \forall \xi \in \Rb^{d \times N}, \\\label{growth3}
\exists \, \beta_0 > 0 : \; 
& 0 \leq W_{0}\left(  \xi\right)  \leq
\beta_0\left(  1+\left\vert \xi\right\vert ^{q}\right),
\; \; \; \forall \xi \in \Rb^{d \times N}, 
\end{align}
\noindent indeed \eqref{growth2} is a special case of \eqref{growth},
but we relax the condition \eqref{pqn} on the exponents $p$ and $q$, and we require just that 
\begin{equation}\label{newpqn}
1<p\leq q< +\infty.
\end{equation}
In the case under consideration, i.e. for suitably chosen $\chi$ and $u$, we show that the functionals \eqref{introrelaxed} and \eqref{introrelaxedloc} in question, 
evaluated at $\Omega$, admit an integral representation comprising a term which is absolutely continuous with respect to the Lebesgue measure, and a perimeter term, but there is no additional singular term.

Precisely, under certain structure assumptions on the fields 
$\chi \in BV(\Omega;\{0,1\})$ and, consequently, in view of \eqref{growth2} and \eqref{growth3}, also on $u \in W^{1,p}(\Omega;\mathbb R^d)$, we prove the following characterisation.

\begin{theorem}\label{main}
Let $\Omega \subset \Rb^N$ be a bounded, open extension domain.
Consider $p, q$ such that \eqref{newpqn} holds and let $f$ be defined as in 
\eqref{density}, satisfying \eqref{growth2}, \eqref{growth3} and
\begin{equation}\label{fconvex}
f(b,\cdot) \hbox{ is convex for every }b \in \{0,1\}.
\end{equation}
Assume $\chi$ is the characteristic function of an open, connected set of finite perimeter $E \subset \subset \Omega$ that satisfies 
$$
\mathcal H^{N-1}(\partial E) = P(E; \Rb^N), \; \;
\mathcal H^{N-1}(\partial (\Omega \setminus \overline E)) 
= P(\Omega \setminus \overline E; \Rb^N),$$
where $\partial E$ denotes the topological boundary of $E$ and $P(E; \Rb^N)$ is the perimeter of $E$ in $\Rb^N$,
and let $u \in W^{1,p}(\Omega;\mathbb R^d)$
be such that $u \in W^{1,q}(\Omega \setminus \overline{E};\mathbb R^d)$.

Then, 
\begin{equation}\label{rep}
{\cal F}_{\rm loc}(\chi, u;\Omega)= {\cal F}(\chi, u;\Omega) = \int_{\Omega}f(\chi(x),\nabla u(x)) \, dx + |D \chi|(\Omega) = F(\chi,u).
\end{equation}
\end{theorem}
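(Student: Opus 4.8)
The plan is to establish the chain of inequalities $F(\chi,u)\le\mathcal F_{\mathrm{loc}}(\chi,u;\Omega)\le\mathcal F(\chi,u;\Omega)\le F(\chi,u)$, which forces all three quantities to coincide and gives \eqref{rep}. The middle inequality is free: every pair $(\chi_n,u_n)$ admissible in the definition \eqref{introrelaxed} of $\mathcal F$ is also admissible in the definition \eqref{introrelaxedloc} of $\mathcal F_{\mathrm{loc}}$, since $W^{1,q}(\Omega;\mathbb R^d)\subset W^{1,q}_{\mathrm{loc}}(\Omega;\mathbb R^d)$; hence $\mathcal F_{\mathrm{loc}}(\chi,u;\Omega)\le\mathcal F(\chi,u;\Omega)$. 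Thus only a lower bound for $\mathcal F_{\mathrm{loc}}$ and an upper bound for $\mathcal F$ remain.

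For the lower bound I would take an arbitrary admissible sequence $(\chi_n,u_n)$ and, passing to a subsequence, reduce to the case $\sup_n F(\chi_n,u_n;\Omega)<+\infty$, then treat the two summands of $F$ separately. Lower semicontinuity of the total variation under $\chi_n\overset{\ast}{\rightharpoonup}\chi$ gives $|D\chi|(\Omega)\le\liminf_n|D\chi_n|(\Omega)$. For the bulk term I would use that $\chi_n\to\chi$ in $L^1(\Omega)$, hence a.e. along a subsequence, while $\nabla u_n\rightharpoonup\nabla u$ in $L^p(\Omega;\mathbb R^{d\times N})$, $p>1$, and invoke the De Giorgi--Ioffe lower semicontinuity theorem for the Carath\'eodory integrand $(x,s,\xi)\mapsto s\,W_1(\xi)+(1-s)\,W_0(\xi)$ of \eqref{density}, which is non-negative and, by \eqref{growth2}, \eqref{growth3} and \eqref{fconvex}, convex in $\xi$; this yields $\int_\Omega f(\chi,\nabla u)\,dx\le\liminf_n\int_\Omega f(\chi_n,\nabla u_n)\,dx$. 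Adding the two estimates and taking the infimum over admissible sequences gives $F(\chi,u)\le\mathcal F_{\mathrm{loc}}(\chi,u;\Omega)$.

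For the upper bound I would keep $\chi_n\equiv\chi$, so that $\chi_n\overset{\ast}{\rightharpoonup}\chi$ trivially and $|D\chi_n|(\Omega)=|D\chi|(\Omega)=P(E;\mathbb R^N)$ for all $n$; note also that $\mathcal H^{N-1}(\partial E)<+\infty$ forces $|\partial E|=0$, so that, up to a Lebesgue-null set, $\Omega$ is the disjoint union of $E$ (where $\chi=1$) and $\Omega\setminus\overline E$ (where $\chi=0$), and $\int_\Omega f(\chi,\nabla v)\,dx=\int_E W_1(\nabla v)\,dx+\int_{\Omega\setminus\overline E}W_0(\nabla v)\,dx$ for every $v$. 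The recovery sequence should modify $u$ only inside $E$: using the structural assumptions on $E$ and on $\Omega\setminus\overline E$, I would first extend $u|_{\Omega\setminus\overline E}\in W^{1,q}(\Omega\setminus\overline E;\mathbb R^d)$ to some $\bar u\in W^{1,q}(\Omega;\mathbb R^d)$ with $\bar u=u$ a.e. on $\Omega\setminus\overline E$; since $u\in W^{1,p}(\Omega;\mathbb R^d)$ globally, $u$ and $\bar u$ share the same trace on $\partial E$, so $v_0:=u-\bar u$ vanishes on $\Omega\setminus\overline E$ and its restriction to $E$ lies in $W^{1,p}_0(E;\mathbb R^d)$. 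Choosing $\psi_n\in C^\infty_c(E;\mathbb R^d)$ with $\psi_n\to v_0$ in $W^{1,p}(\Omega;\mathbb R^d)$ and setting $u_n:=\bar u+\psi_n$, one gets $u_n\in W^{1,q}(\Omega;\mathbb R^d)$, $u_n\to u$ in $W^{1,p}(\Omega;\mathbb R^d)$ (hence weakly), $\chi_n\overset{\ast}{\rightharpoonup}\chi$, $u_n\equiv u$ on $\Omega\setminus\overline E$, and $\nabla u_n\to\nabla u$ in $L^p(E;\mathbb R^{d\times N})$. Therefore $\int_{\Omega\setminus\overline E}W_0(\nabla u_n)\,dx=\int_{\Omega\setminus\overline E}W_0(\nabla u)\,dx$, whereas $\int_E W_1(\nabla u_n)\,dx\to\int_E W_1(\nabla u)\,dx$ by continuity of $W_1$, its $p$-growth \eqref{growth2}, and the strong $L^p$ convergence of the gradients (generalised dominated convergence); hence $F(\chi_n,u_n;\Omega)\to F(\chi,u)$, so $\mathcal F(\chi,u;\Omega)\le F(\chi,u)$.

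I expect the only genuine difficulty to be this last construction, namely the approximation of $u$ in the $W^{1,p}$-topology by $W^{1,q}(\Omega;\mathbb R^d)$ maps that coincide with $u$ on $\Omega\setminus\overline E$. This is precisely where the hypotheses $\mathcal H^{N-1}(\partial E)=P(E;\mathbb R^N)$ and $\mathcal H^{N-1}(\partial(\Omega\setminus\overline E))=P(\Omega\setminus\overline E;\mathbb R^N)$ enter: they guarantee that $E$ and $\Omega\setminus\overline E$ behave, as far as traces, extension operators and density of smooth maps are concerned, like Lipschitz domains — so that the extension $\bar u$ exists, the trace of $u$ on $\partial E$ is well defined and shared by $\bar u$, and ``vanishing trace on $\partial E$'' is equivalent to membership in $W^{1,p}_0(E;\mathbb R^d)$. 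Everything else — the lower semicontinuity and the limit passages controlled by the $p$- and $q$-growth conditions — is routine, the only mild subtlety being the joint dependence on $(\chi_n,\nabla u_n)$ in the lower bound, which the De Giorgi--Ioffe theorem takes care of.
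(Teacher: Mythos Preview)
Your lower bound is fine and matches the paper's: Ioffe's theorem for the bulk term plus lower semicontinuity of the perimeter.

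The upper bound, however, has a real gap. You fix $\chi_n\equiv\chi$ and rely on extending $u|_{\Omega\setminus\overline E}\in W^{1,q}(\Omega\setminus\overline E;\mathbb R^d)$ to some $\bar u\in W^{1,q}(\Omega;\mathbb R^d)$, and then on the identification $v_0|_E\in W^{1,p}_0(E)$. Your justification is that the assumptions $\mathcal H^{N-1}(\partial E)=P(E;\mathbb R^N)$ and $\mathcal H^{N-1}(\partial(\Omega\setminus\overline E))=P(\Omega\setminus\overline E;\mathbb R^N)$ make $E$ and $\Omega\setminus\overline E$ ``behave like Lipschitz domains'' for extensions and traces. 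This is not true. These perimeter conditions only exclude lower-dimensional ``extra'' boundary; they are satisfied, for instance, by domains with power-type cusps. If $E$ is a ball with an outward cusp removed, then $\Omega\setminus\overline E$ contains a cusp component which is \emph{not} a $W^{1,q}$-extension domain, yet both perimeter identities hold. For such $E$ there is no $\bar u$ in general, and your construction collapses. The paper is explicit that the class of admissible $E$ is strictly larger than Lipschitz, and that the freedom to vary $\chi$ is essential (see the discussion of the Zhikov example right after the statement of Theorem~\ref{main}); Remark~3.2\,\textbf{1)} confirms that keeping $\chi_n\equiv\chi$ requires the \emph{stronger} hypothesis $u\in W^{1,q}(\Omega\setminus\widehat E)$ for some compact $\widehat E\subset E$.

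The paper's route is genuinely different. It never extends across $\partial E$. Instead it applies Schmidt's strict interior approximation (Theorem~\ref{Schmidt}) to both $E$ and $\Omega\setminus\overline E$, producing smooth sets $E_{2\varepsilon}\subset\subset E_\varepsilon\subset\subset E$ and $\Omega_\varepsilon\subset\subset\Omega\setminus\overline E$ whose perimeters converge. The recovery $\chi_\varepsilon$ is the characteristic function of $E\cup F_\varepsilon$, where $F_\varepsilon$ is the buffer layer between $E$ and $\Omega_\varepsilon$; this layer absorbs the region where the mollification $\widetilde u_{\varepsilon,j}$ of $u$ has not yet converged in $W^{1,q}$, and its contribution is controlled by \eqref{growth2}. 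The sequence $w_{\varepsilon,j}$ is built by gluing two mollifications across the inner layer $L_\varepsilon$ via a De~Giorgi-type slicing/cut-off argument, and the convexity of $W_0$, $W_1$ enters through Jensen's inequality to pass from the mollified gradients back to $\nabla u$. In short: where you try to push $u$ into $W^{1,q}$ by extension, the paper pushes the set boundary away from $\partial E$ so that only mollification on compactly contained regions is needed.
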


In order to achieve these conclusions we do not invoke the results 
contained in either Theorem 4.1 or Theorem 4.3 in \cite{BZ}, which provide weak and strong measure representations for $\mathcal F$ and $\mathcal F_{\rm loc}$, respectively. Instead, we use a direct approach to prove double inequalities starting from \eqref{introrelaxed} and \eqref{introrelaxedloc} evaluated at $\Omega$.
For this reason, we allow for the less restrictive range of integrability exponents $p$ and $q$ considered in \eqref{newpqn}, as compared with \eqref{pqn}.

Although the problem under consideration is vectorial in nature, it is worth pointing out that in hypothesis \eqref{fconvex} we require convexity of $f(b,\cdot)$, rather than quasiconvexity. This is due to the fact that we have simultaneously an explicit dependence on the position in the body, through the field
$\chi$, and a gap problem. Indeed, a counterexample provided in \cite{ABF} shows that a representation of the form \eqref{rep} no longer holds when convexity is replaced with a weaker assumption.
However, it bears mentioning that even though we ask that $W_1$ be convex, in the case  of the density $W_0$ this hypothesis can be weakened,
see Remarks \ref{otherhyp}.

Our proof of Theorem \ref{main} is based on a result of Schmidt \cite{S}, which states that under some mild hypotheses on its boundary (see \eqref{goodset}), a set $E$ can be approximated from the inside by smooth sets, in such a way that the perimeters also converge
(cf. Theorem \ref{Schmidt}).

In particular, every set with Lipschitz boundary satisfies \eqref{goodset}.
However, if this condition fails to hold there are known counterexamples that show that the inner approximation by smooth sets may no longer be possible. We refer to Section 2 for more details.

We point out that one cannot expect the conclusions of Theorem \ref{main} to be true in general. Indeed, an example due to Zhikov \cite[page 467]{Zhikov}, and also considered in \cite[eq. (22) and example 1.15]{Mucci},
shows that some functionals where the integrand $f$ in \eqref{density}, has the form $f(b,\xi):= |\xi|^{bp + (1-b)q}$,
i.e., with a gap in the growth and coercivity exponents, do exhibit concentration effects. Taking $b=\chi_E$ fixed and $E$ the set defined by
\begin{equation}\label{E}
E:=\{ (x_1,x_2)\in B_1: x_1x_2>0\},
\end{equation} 
where $B_1\subset\subset  \Omega$ is the unit ball in $\mathbb R^2$, it is shown in
\cite{Zhikov} that if $W_0(\xi)= |\xi|^q$ and $W_1(\xi)= |\xi|^p$, with 
$1 < p < 2 < q$, then it is not possible to approximate in energy a target
$u \in W^{1,p}(\Omega; \Rb^d)$ using a more regular sequence
$u_n \in W^{1,q}(\Omega; \Rb^d)$. In our context, however, this example is ruled out
since $E$ lies in the class of Theorem \ref{Schmidt} and we are not required to work with a fixed $\chi_E$. The inner approximation result gives us
the freedom to approximate $\chi_E$ using a sequence of characteristic functions $\chi_{n}$ which allows us to create a buffer layer, separating the regions where $f$ has a different bound from above and $u$ has a different integrability exponent. It is, therefore, possible to construct a more regular sequence $u_n$ such that the energy $\displaystyle \liminf_{n \to +\infty}F(\chi_n, u_n)$ is bounded above by the energy $F(\chi,u)$ (cf. proof of Theorem \ref{main}).

However, in certain cases, energy concentrations do occur. Indeed, counterexamples obtained in \cite{MM} show that, unless a structure assumption is verified by the integrand, the relaxation process for the energy
$$
\inf\left\{  \underset{n\rightarrow +\infty}{\liminf}
\int_{A}f(x,\nabla u_n(x)) \, dx  :  u_{n}
\in W^{1,q}_{\rm loc}\left(A;\mathbb{R}^{d}\right), 
 u_{n}\rightharpoonup u\text{ in }
W^{1,p}\left(  A;\mathbb{R}^{d}\right) \right\}$$
leads to an infinite Dirac mass concentrated at a point. 

On the other hand, not every set of finite perimeter satisfies the assumptions imposed on the set $E$ in Theorem \ref{main}. Indeed, Example 3.53 in \cite{AFP} establishes the existence of an open set of finite perimeter $E$ in $\Rb^N$, $N \geq 2$, whose topological boundary has strictly positive Lebesgue measure and, thus, which fails to satisfy 
$\mathcal H^{N-1}(\partial E) = P(E; \Rb^N)$. This set, therefore, falls outside the scope of Theorem \ref{main}. Nevertheless (cf. \cite[Proposition 5.52]{AFP}), this situation does not hold in dimension $N=1$ which is in accordance with the fact that the one dimensional integral representation obtained in \cite[Proposition 4.8]{BZ} contains no singular measure.

Notice also that our integral representation result holds for pairs $(\chi,u)$ under the assumption that $u$ should be more regular in a certain subset of $\Omega$, namely $u \in W^{1,q}(\Omega \setminus \overline{E};\mathbb R^d)$, where 
$\chi$ is the characteristic function of the set $E$. However, this assumption is not too restrictive since if ${\cal F}(\chi, u;\Omega) < + \infty$, which is the usual condition considered in the literature (cf., for example, \cite{ABF,BFM,FMy}), and $W_0(\xi) = |\xi|^q$, then the additional regularity required of $u$ follows as a consequence, at least in the case where the set $E$ is sufficiently regular so that the Poincaré-Wirtinger inequality can be applied. 
	
Given the motivation stated above and the applications we have in mind, it will also be important
to consider the following related functional, where a volume constraint is imposed.
Given $0 < \theta < 1$ and 
$\chi \in BV\left(\Omega;\left\{  0,1\right\}  \right)$ such that
$\displaystyle \frac{1}{\LL^{N}(\Omega)}\int_\Omega \chi(x) \, dx =\theta$, we define
\begin{align}
\mathcal{F}_{\rm vol}\left(\chi,u;\Omega\right)   &  :=\inf\left\{  
\underset{n\rightarrow +\infty}{\lim\inf}\,
F\left(\chi_{n},u_{n};\Omega\right)  :  u_{n}
\in W^{1,q}\left(\Omega;\mathbb{R}^{d}\right),
\chi_{n}
\in BV\left(\Omega;\left\{  0,1\right\}  \right),  
\right. \label{vol}\\
&  \hspace{1cm} \left.  u_{n}\rightharpoonup u\text{ in }
W^{1,p}\left(\Omega;\mathbb{R}^{d}\right),
\chi_{n}\overset{\ast}{\rightharpoonup}\chi\text{ in }
BV\left(\Omega;\left\{  0,1\right\}  \right),  
\frac{1}{\LL^{N}(\Omega)}\int_\Omega \chi_n(x) \, dx =\theta \right\}.\nonumber
\end{align}
Under similar assumptions to those of Theorem \ref{main}, we show that \eqref{vol} admits the same integral representation as \eqref{rep} (cf. Remark \ref{volfrac}).

We organise the paper as follows. In Section 2 we set the notation 
and we provide some definitions and results
which will be used throughout. Theorem \ref{main} is proved in Section 3, where a similar cha\-rac\-te\-ri\-sa\-tion result is also shown in the case
where the convexity assumption
\eqref{fconvex} is replaced with the weaker assumption that the quasiconvex envelopes, $QW_1$ and $QW_0$ of $W_1$ and $W_0$, respectively, are convex.
Finally, in Section 4 we give some applications to dimension reduction problems. 

\section{Preliminaries}

In this section we fix notations and quote some definitions and 
results that will be used in the sequel.

Throughout the text $\Omega \subset \mathbb R^{N}$ will denote an open, bounded set.

We will use the following notations:
\begin{itemize}
\item ${\mathcal O}(\Omega)$ is the family of all open subsets
of $\Omega $;
\item $\mathcal M (\Omega)$ is the set of finite Radon
measures on $\Omega$;
\item $\left |\mu \right |$ stands for the total variation of a measure  $\mu\in \mathcal M (\Omega)$; 
\item $\mathcal L^{N}$ and $\mathcal H^{N-1}$ stand for the  $N$-dimensional Lebesgue measure 
and the $\left(  N-1\right)$-dimensional Hausdorff measure in $\mathbb R^N$, respectively;
\item the symbol $d x$ will also be used to denote integration with respect to $\mathcal L^{N}$;
\item  $C$ represents a generic positive constant that may change from line to line.
\end{itemize}

We start by recalling a well known result due to Ioffe 
\cite[Theorem 1.1]{Ioffe}.
\begin{theorem}\label{thm2.4ABFvariant}
Let $g:\mathbb R^m\times \mathbb R^{d \times N} \to [0, +\infty)$ 
be a Borel integrand such that $g(b,\cdot)$ is convex for every 
$b \in \mathbb R^m$. Then the functional 
$$
G(v,u):=\int_{\Omega}g(v(x), \nabla u(x)) \, dx
$$
is lower semicontinuous in 
$L^1(\Omega;\mathbb R^m)_{\rm strong} \times 
W^{1,1}(\Omega;\mathbb R^d)_{\rm weak}$. 
\end{theorem}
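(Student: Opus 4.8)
The plan is to reproduce the classical argument of Ioffe: bound $g$ from below by countably many minorants that are affine in $\xi$ and regular in $b$, pass to the limit in each minorant separately, and then reassemble. First I would dispose of the trivial cases: passing to a subsequence I may assume that $\liminf_{n\to+\infty} G(v_n,u_n)=\lim_{n\to+\infty}G(v_n,u_n)=:\ell$ with $\ell<+\infty$, and, extracting a further subsequence, that $v_n\to v$ pointwise $\mathcal L^N$-a.e.\ in $\Omega$. Since $\nabla u_n\rightharpoonup\nabla u$ in $L^1(\Omega;\mathbb R^{d\times N})$, the Dunford--Pettis theorem gives that the family $\{\nabla u_n\}_n$ is equi-integrable; this is the fact that will drive the whole argument.

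Next I would invoke the structure of a nonnegative convex normal integrand: there exist continuous, bounded functions $\varphi_k:\mathbb R^m\to\mathbb R$ and $\psi_k:\mathbb R^m\to\mathbb R^{d\times N}$, $k\in\mathbb N$, such that
\[
g(b,\xi)=\sup_{k\in\mathbb N}\bigl(\varphi_k(b)+\psi_k(b)\cdot\xi\bigr)\qquad\text{for all }(b,\xi)\in\mathbb R^m\times\mathbb R^{d\times N}.
\]
Here convexity of $g(b,\cdot)$ is what makes affine-in-$\xi$ minorants sufficient, while the measurability/semicontinuity of $g$ in the variable $b$, combined with the Scorza--Dragoni theorem and a De Giorgi type representation, is what forces the supremum to be exactly $g$. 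Given $\varepsilon>0$, monotone convergence together with a measurable selection argument then let me choose a finite Borel partition $\{B_k\}_{k=1}^{J}$ of $\Omega$ with
\[
\int_\Omega g(v,\nabla u)\,dx\le\sum_{k=1}^{J}\int_{B_k}\bigl(\varphi_k(v)+\psi_k(v)\cdot\nabla u\bigr)\,dx+\varepsilon.
\]

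The heart of the proof is then the single-minorant bound
\[
\liminf_{n\to+\infty}\int_{B_k}\bigl(\varphi_k(v_n)+\psi_k(v_n)\cdot\nabla u_n\bigr)\,dx\ \ge\ \int_{B_k}\bigl(\varphi_k(v)+\psi_k(v)\cdot\nabla u\bigr)\,dx.
\]
For the $\varphi_k$ term this is immediate from $\varphi_k(v_n)\to\varphi_k(v)$ a.e.\ and boundedly, via dominated convergence. For the $\psi_k$ term I would split $\psi_k(v_n)\cdot\nabla u_n=\psi_k(v)\cdot\nabla u_n+(\psi_k(v_n)-\psi_k(v))\cdot\nabla u_n$: the first summand converges to $\int_{B_k}\psi_k(v)\cdot\nabla u$ because $\psi_k(v)\in L^\infty$ and $\nabla u_n\rightharpoonup\nabla u$ in $L^1$; for the second, Egorov's theorem supplies, for each $\eta>0$, a set $A_\eta$ with $\mathcal L^N(\Omega\setminus A_\eta)<\eta$ on which $\psi_k(v_n)\to\psi_k(v)$ uniformly, so the contribution over $A_\eta$ tends to $0$ as $n\to+\infty$ (uniform smallness against the $L^1$-bounded $\nabla u_n$), while the contribution over $\Omega\setminus A_\eta$ is dominated by $2\|\psi_k\|_\infty\,\sup_n\int_{\Omega\setminus A_\eta}|\nabla u_n|\,dx$, which tends to $0$ as $\eta\to0$ precisely by equi-integrability. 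Summing the single-minorant bound over $k=1,\dots,J$ (a finite sum, so $\liminf$ commutes with it), using $G(v_n,u_n)\ge\sum_{k=1}^{J}\int_{B_k}(\varphi_k(v_n)+\psi_k(v_n)\cdot\nabla u_n)$ and letting $\varepsilon\to0$, I would conclude $\ell\ge\int_\Omega g(v,\nabla u)\,dx=G(v,u)$, which is the claim.

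The step I expect to be hardest is the passage to the limit in the product $\psi_k(v_n)\cdot\nabla u_n$: one factor converges only strongly a.e.\ (and boundedly) and the other only weakly in $L^1$, and such products need not converge to the product of the limits in the absence of equi-integrability --- it is exactly here that Dunford--Pettis and Egorov must be combined with care. A more structural point is the minorant representation of $g$; this is standard for normal convex integrands but is the one place where the hypothesis on the dependence of $g$ on $b$ is genuinely used, and it cannot simply be dropped: for $g$ equal to $1$ on $\{b=0\}$ and to $0$ elsewhere the associated functional fails to be lower semicontinuous along $v_n\equiv 1/n\to0$ (with $u_n=u$ fixed). Finally, an essentially equivalent alternative would sidestep the explicit representation: pass to the Young measure $\{\nu_x\}$ generated by the equi-integrable sequence $\{\nabla u_n\}$, note that its barycenter must equal $\nabla u(x)$ by the weak $L^1$ convergence, observe that $(v_n,\nabla u_n)$ then generates $\delta_{v(x)}\otimes\nu_x$ since $v_n\to v$ a.e., and apply the lower semicontinuity half of the fundamental theorem of Young measures followed by Jensen's inequality in the $\xi$ variable.
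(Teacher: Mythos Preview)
The paper does not prove this theorem at all: it is quoted verbatim as a known result due to Ioffe, with a bare citation to \cite[Theorem~1.1]{Ioffe} and no argument supplied. Your proposal is a faithful reconstruction of the classical Ioffe proof --- affine-in-$\xi$ minorants with continuous bounded coefficients, a finite Borel partition to approximate the supremum, and the Egorov/Dunford--Pettis trick to handle the weak--strong product $\psi_k(v_n)\cdot\nabla u_n$ --- and it is correct. The alternative Young-measure route you sketch at the end is equally standard and also works.

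One genuine point you raise deserves emphasis: the paper's hypothesis ``Borel integrand'' is, as you observe with the example $g(b,\xi)=\chi_{\{b=0\}}$, not literally sufficient for the conclusion; Ioffe's original theorem requires $g$ to be a \emph{normal} integrand (in particular lower semicontinuous in $b$ for fixed $\xi$, or jointly l.s.c.), and it is this hypothesis that underwrites the minorant representation you invoke. In the paper's applications this is moot --- the integrand $f(b,\xi)=bW_1(\xi)+(1-b)W_0(\xi)$ is continuous --- but your caveat is well taken, and the statement as recorded should be read with the stronger hypothesis implicit.
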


In the following we give some preliminary notions related with sets of finite
perimeter. For a detailed treatment we refer to \cite{AFP}.

To this end, we recall that a function $w\in L^{1}(\Omega;{\mathbb{R}}^{d})$ is said to be of
	\emph{bounded variation}, and we write $w\in BV(\Omega;{\mathbb{R}}^{d})$, if
	all its first order distributional derivatives $D_{j}w_{i}$ belong to $\mathcal{M}%
	(\Omega)$ for $1\leq i\leq d$ and $1\leq j\leq N$.

The matrix-valued measure whose entries are $D_{j}w_{i}$ is denoted by $Dw$
and $|Dw|$ stands for its total variation.
We observe that if $w\in BV(\Omega;\mathbb{R}^{d})$ then $w\mapsto|Dw|(\Omega)$ is lower
semicontinuous in $BV(\Omega;\mathbb{R}^{d})$ with respect to the
$L_{\mathrm{loc}}^{1}(\Omega;\mathbb{R}^{d})$ topology.

\begin{definition}
	\label{Setsoffiniteperimeter} Let $E$ be an $\mathcal{L}^{N}$- measurable
	subset of $\mathbb{R}^{N}$. For any open set $\Omega\subset\mathbb{R}^{N}$ the
	{\em perimeter} of $E$ in $\Omega$, denoted by $P(E;\Omega)$, is 
	given by
	\begin{equation}
	\label{perimeter}P(E;\Omega):=\sup\left\{  \int_{E} \mathrm{div}\varphi(x) \,dx:
	\varphi\in C^{1}_{c}(\Omega;\mathbb{R}^N), \|\varphi\|_{L^{\infty}}%
	\leq1\right\}  .
	\end{equation}
	We say that $E$ is a {\em set of finite perimeter} in $\Omega$ if $P(E;\Omega) <+
	\infty.$
\end{definition}

Recalling that if $\mathcal{L}^{N}(E \cap\Omega)$ is finite, then $\chi_{E}
\in L^{1}(\Omega)$, by \cite[Proposition 3.6]{AFP}, it follows
that $E$ has finite perimeter in $\Omega$ if and only if $\chi_{E} \in
BV(\Omega)$ and $P(E;\Omega)$ coincides with $|D\chi_{E}|(\Omega)$, the total
variation in $\Omega$ of the distributional derivative of $\chi_{E}$.
Moreover,  a
generalised Gauss-Green formula holds:
\begin{equation}\nonumber
{\int_{E}\mathrm{div}\varphi(x) \, dx
=\int_{\Omega}\left\langle\nu_{E}(x),\varphi(x)\right\rangle \, d|D\chi_{E}|,
\;\;\forall\,\varphi\in C_{c}^{1}(\Omega;\mathbb{R}^N)},
\end{equation}
where $D\chi_{E}=\nu_{E}|D\chi_{E}|$ is the polar decomposition of $D\chi_{E}$.

We also recall that, when dealing with sets of finite measure, a sequence of
sets $E_{n}$ converges to $E$ in measure in $\Omega$ if $\mathcal{L}%
^{N}(\Omega\cap(E_{n}\Delta E))$ converges to $0$ as $n\rightarrow +\infty$,
where $\Delta$ stands for the symmetric difference. 
This convergence is equivalent to $L^{1}(\Omega)$ 
convergence of the characteristic functions of the corresponding 
sets.

It is well known (cf. \cite{AFP}) that it is always possible to approximate, in measure, a set $E$ of finite perimeter in $\Rb^N$, with sets $E_{\eps}$ with smooth boundary, in such a way that the perimeters also converge. 
However, an open set of finite perimeter in $\Rb^N$ cannot, in general, be approximated strictly from within. In the sequel we rely on the following theorem due to Schmidt \cite{S} which states that, under mild hypotheses on its boundary, the approximation of the set $E$ is also true with the additional requirement
that the smooth sets satisfy $E_{\eps} \subset \subset E$.

\begin{theorem}\label{Schmidt}
[Strict interior approximation of the perimeter]. Let $E$ be
a bounded open set in $\Rb^N$ whose topological boundary $\partial E$ is well-behaved in the sense that 
\begin{equation}\label{goodset}
\mathcal H^{N-1}(\partial E) = P(E; \Rb^N).
\end{equation}
Then, for every $\eps > 0,$ there exists an open set $E_{\eps}$ with smooth boundary in $\Rb^N$ such that
\begin{equation}\label{sch}
E_{\eps} \subset \subset E, \; E \setminus E_{\eps} \subset N_{\eps}(\partial E) \cap N_{\eps}(\partial E_{\eps}), \; P(E_{\eps};\Rb^N) \leq P(E;\Rb^N) + \eps, 
\end{equation}
where we have used the notation $N_{\eps}(\cdot)$ for $\eps$-neighbourhoods of sets in $\Rb^N$.
\end{theorem}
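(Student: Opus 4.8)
\textbf{Proof proposal for Theorem \ref{Schmidt} (strict interior approximation of the perimeter).}

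The plan is to reduce the statement to a smoothing argument applied near the topological boundary, using the hypothesis \eqref{goodset} to control the perimeter. Recall the classical fact (e.g.\ \cite{AFP}, Theorem 3.42, or the mollification construction) that any set of finite perimeter $E$ can be approximated in measure by \emph{sublevel sets} of a smooth function with convergence of perimeters: precisely, if $u_\delta := \chi_E \ast \rho_\delta$ is a standard mollification of $\chi_E$, then for a.e.\ level $t \in (0,1)$ the set $\{u_\delta > t\}$ has smooth boundary (by Sard's theorem applied to the smooth function $u_\delta$), and one has $\int_0^1 P(\{u_\delta > t\}; \Rb^N)\, dt = |Du_\delta|(\Rb^N) \le |D\chi_E|(\Rb^N) = P(E;\Rb^N)$ by the coarea formula and the fact that mollification does not increase total variation. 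Hence for each $\delta$ there is a set of levels $t$ of positive measure with $P(\{u_\delta > t\};\Rb^N) \le P(E;\Rb^N) + \eps$; the work is to choose $\delta$ and $t$ so that, in addition, $\{u_\delta > t\} \subset\subset E$ and the symmetric-difference layer is contained in the prescribed neighbourhoods.

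First I would fix $\eps > 0$ and exploit \eqref{goodset}. Since $\partial E$ is compact (as $E$ is bounded) and $\mathcal H^{N-1}(\partial E) = P(E;\Rb^N) < +\infty$, the restriction $\mathcal H^{N-1}\res \partial E$ is a finite Radon measure, and one can cover $\partial E$ by finitely many small balls whose total $\mathcal H^{N-1}(\partial E)$-mass is as small as we like outside a controlled piece; more usefully, the reduced boundary $\partial^* E$ carries all of the perimeter, and the hypothesis forces $\mathcal H^{N-1}(\partial E \setminus \partial^* E) = 0$. This is the structural input that rules out the pathological examples (like Example 3.53 in \cite{AFP}) where $\partial E$ is ``fat''. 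I would then define, for $\eta > 0$, the open set $E^{-\eta} := \{x \in E : \dist(x, \partial E) > \eta\}$; since $E$ is open, $\bigcup_{\eta > 0} E^{-\eta} = E$, and $\mathcal L^N(E \setminus E^{-\eta}) \to 0$ as $\eta \to 0^+$. The key perimeter estimate to establish is that $P(E^{-\eta}; \Rb^N) \to P(E;\Rb^N)$ as $\eta \to 0^+$: the $\liminf$ inequality is lower semicontinuity of perimeter under $L^1$ convergence, and the $\limsup$ inequality is where \eqref{goodset} enters — one estimates $P(E^{-\eta};\Rb^N) \le \mathcal H^{N-1}(\partial E^{-\eta}) \le \mathcal H^{N-1}(\{x : \dist(x,\partial E) = \eta\})$ plus the contribution from $\partial E$ itself, and by a coarea/Eilenberg-inequality argument on the $1$-Lipschitz function $x \mapsto \dist(x,\partial E)$ together with $\mathcal H^{N-1}(\partial E) = P(E;\Rb^N)$ one controls these level sets for a suitable sequence $\eta \to 0$.

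Having produced an interior open set $E' = E^{-\eta}$ with $E' \subset\subset E$, $E \setminus E' \subset N_\eta(\partial E)$, and $P(E';\Rb^N) \le P(E;\Rb^N) + \eps/2$, I would then apply the classical smooth approximation from within: mollify $\chi_{E'}$ at a scale $\delta \ll \dist(\overline{E'}, \partial E)$ so that $\mathrm{supp}(u_\delta) \subset\subset E$, pick by Sard's theorem and the coarea bound a level $t$ close to $1$ so that $E_\eps := \{u_\delta > t\}$ has smooth boundary, satisfies $E_\eps \subset\subset E'^{\,+\delta} \subset\subset E$, has $P(E_\eps;\Rb^N) \le P(E';\Rb^N) + \eps/2 \le P(E;\Rb^N) + \eps$, and—because $t$ is close to $1$ and $\delta$ is small—the layer $E \setminus E_\eps$ is squeezed into $N_\eps(\partial E) \cap N_\eps(\partial E_\eps)$ (the first inclusion follows from $E_\eps \supset E'^{\,-\delta}$ for $t$ away from $0$, the second from the fact that every point of $E \setminus E_\eps$ is within $O(\delta)$ of $\partial E_\eps$ since $u_\delta$ is close to $\chi_E$). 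Adjusting constants gives \eqref{sch}.

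\textbf{Main obstacle.} The delicate point is the $\limsup$ half of $P(E^{-\eta};\Rb^N) \to P(E;\Rb^N)$: \emph{a priori} the inner parallel sets $E^{-\eta}$ could have perimeter jumping above $P(E;\Rb^N)$ (the distance function's level sets need not be rectifiable of the right dimension without regularity). This is exactly where the hypothesis \eqref{goodset} is indispensable and where Schmidt's argument is genuinely needed rather than a soft mollification — one must show that $\mathcal H^{N-1}$-almost all of $\partial E$ behaves like a reduced boundary so that the inner erosion does not create spurious surface area, and that the transition layer $\{0 < \dist(\cdot,\partial E) < \eta\}$ has $\mathcal H^{N-1}$-slices whose measures sum (via coarea) to something comparable to $\mathcal H^{N-1}(\partial E) = P(E;\Rb^N)$, allowing a good choice of $\eta$. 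Making this rigorous is the heart of the proof; the subsequent mollification-and-Sard step is standard.
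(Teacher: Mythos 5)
First, note that the paper does not prove this statement at all: Theorem \ref{Schmidt} is quoted verbatim from Schmidt's article \cite{S} and used as a black box, so there is no internal proof to compare with — only the paper's surrounding remarks, which are in fact directly relevant to your sketch (see below).

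Your proposal has a genuine gap, and it is exactly the one you flag yourself: the $\limsup$ half of $P(E^{-\eta};\Rb^N)\to P(E;\Rb^N)$ for the inner parallel sets $E^{-\eta}=\{x\in E:\ \dist(x,\partial E)>\eta\}$. The coarea/Eilenberg argument you invoke on the $1$-Lipschitz distance function only yields, for a good sequence of levels, $\mathcal H^{N-1}(\{\dist(\cdot,\partial E)=\eta\}\cap E)$ controlled by the \emph{one-sided inner} Minkowski content of $\partial E$, and the hypothesis \eqref{goodset} together with Federer's theorem on Minkowski content of closed rectifiable sets only controls the \emph{two-sided} content by $\mathcal H^{N-1}(\partial E)=P(E;\Rb^N)$; splitting off the inner contribution naively costs a factor (up to $2$, or more generally a constant $C>1$). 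This is precisely the phenomenon the paper warns about right after the statement: erosion-type or ball-covering constructions give only $P(E_\eps;\Rb^N)\le C\,\mathcal H^{N-1}(\partial E)$, and Schmidt's sharp bound $P(E_\eps;\Rb^N)\le P(E;\Rb^N)+\eps$ requires a covering of $\partial E$ by suitably flat sets (or, if one insists on your route, a sharp one-sided Minkowski-content theorem exploiting that $\mathcal H^{N-1}$-a.e.\ point of $\partial E$ is a reduced-boundary point of density $1/2$). Neither ingredient is supplied in your sketch, so the core of the theorem remains unproved; the slit-cube example in the paper shows the claim is false without \eqref{goodset}, so no soft argument can bypass this step.

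Two secondary points in the final mollification step also need repair, though they are fixable. First, choosing the level $t$ ``close to $1$'' destroys the coarea average bound (restricting to $t\in(t_0,1)$ introduces a factor $1/(1-t_0)$); fortunately $t$ close to $1$ is not needed: since you mollify $\chi_{E'}$ at scale $\delta<\dist(\overline{E'},\partial E)$, \emph{any} regular value $t\in(0,1)$ gives $\{u_\delta>t\}\subset\subset E$, and a Chebyshev argument on $\int_0^1 P(\{u_\delta>t\};\Rb^N)\,dt\le P(E';\Rb^N)$ selects a good regular level. Second, the inclusion $E\setminus E_\eps\subset N_\eps(\partial E_\eps)$ does not follow from ``$u_\delta$ is close to $\chi_E$'' (it mollifies $\chi_{E'}$, not $\chi_E$); it can instead be obtained, for $\eta+\delta$ small depending on $E$ and $\eps$, from a compactness argument: if points $x_k\in E$ with $\dist(x_k,\partial E)\le s_k\to 0$ satisfied $\dist(x_k,E^{-s_k})\ge\eps$, then $\mathcal L^N(B_\eps(x_k)\cap E)\le\mathcal L^N(E\cap\{\dist(\cdot,\partial E)\le s_k\})\to 0$, contradicting the openness of $E$ at a limit point $x_0\in\partial E$. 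These repairs, however, do not touch the missing sharp perimeter estimate, which is the heart of Schmidt's result.
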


The conditions \eqref{sch} imply, in particular, that 
$\displaystyle E = \bigcup_{\eps > 0}E_{\eps}$ and that 
\begin{equation}\nonumber
\lim_{\varepsilon \to 0^+}\LL^N(E_{\eps}) = \LL^N(E).
\end{equation}
On the other hand, the lower semicontinuity of the perimeter and the fact that
$\partial E_{\eps}$ are smooth, yield
\begin{equation}\nonumber\label{perconv}
\lim_{\varepsilon \to 0^+}\mathcal H^{N-1}(\partial E_{\eps}) 
= \lim_{\varepsilon \to 0^+} P(E_{\eps}; \Rb^N) = P(E; \Rb^N).
\end{equation}

In order to achieve the condition $P(E_{\eps};\Rb^N) \leq P(E;\Rb^N) + \eps$, rather
than the weaker bound $P(E_{\eps};\Rb^N) \leq C \mathcal H^{N-1}(\partial E)$, for
some constant $C > 1$, it is not sufficient to cover $\partial E$ with suitable balls
and to construct the approximants $E_{\eps}$ by removing these balls from $E$, but
instead a covering of $\partial E$ by suitably flat sets is required.

The conclusions of Theorem \ref{Schmidt} were already known to hold for bounded Lipschitz domains $E$ (see the references in \cite{S}). Indeed, every set with Lipschitz boundary satisfies \eqref{goodset}.
However, if this condition fails to hold there are known counterexamples that show that the inner approximation by smooth sets may no longer be possible. 
Indeed, letting 
$E = (0,1)^{N-1}\times \left((0,1) \setminus \{\frac{1}{2}\}\right)$,
and applying the lower semicontinuity of the perimeter on both halves of $E$, one
concludes that all approximations $E_{\eps}$ satisfy
$$\liminf_{\varepsilon \to 0^+}P(E_{\eps};\Rb^N) \geq 2N+2 > 2N = P(E;\Rb^N).$$
Notice that in this example
$$P(E;\Rb^N) = 2N < 2N+1 = \mathcal H^{N-1}(\partial E).$$
We also refer to Example 5.2 in \cite{S} and to Remark 1.27 in \cite{Giusti}.

We recall the notions of quasiconvex function and quasiconvex envelope which will be
used in Corollary \ref{Qconv}.

\begin{definition}\label{Morrey-qcx}
A Borel measurable and locally bounded function $f:\mathbb R^{d\times N}\to \mathbb R$ is said to be 
{\em quasiconvex} if
\begin{equation}\label{Mqcx}
f(\xi)\leq\frac{1}{{\mathcal L}^N(D)}\int_D 
f(\xi+\nabla \varphi(x)) \, dx,
\end{equation}
for every bounded, open set $D\subset \mathbb R^N$, for every $\xi \in \mathbb R^{d\times N}$ and for 
every $\varphi \in W^{1,\infty}_0(D;\mathbb R^d)$.  
\end{definition}

\begin{remark}\label{Mqcxobservation}
{\rm We recall that if \eqref{Mqcx} holds for a certain set $D$, then it holds for any bounded, open set 
in $\mathbb R^N$.
Notice also that, in the above definition, the value $+\infty$ is excluded from the range of $f$.}
\end{remark}

\begin{definition}\label{qcxenv}
The {\em quasiconvex envelope} 
of $f$ is the greatest
quasiconvex 
function that is less than or equal to $f$.
\end{definition}

We conclude this section by recalling the notions of weak and strong representation by means of measures.

\begin{definition}\label{measrep}
	Let $\mu$ be a Radon measure on $\overline{\Omega},$ let $(\chi, u) \in BV(\Omega;\{0,1\})\times W^{1,p}(\Omega;\mathbb R^d)$, and $\mathcal{G}\left(  \chi,u;\cdot\right)$ be a functional defined on $\mathcal O(\Omega)$. We say that
	
	\begin{enumerate}
		\item[a)] $\mu$ {\em (strongly) represents} 
		$\mathcal{G}\left(  \chi,u;\cdot\right)$ 
		if $\mu\left(  A\right)  =\mathcal{G}\left(  \chi,u;A\right)$ 
		for all open sets $A\subset\Omega;$
		
		\item[b)] $\mu~$ {\em weakly represents} 
		$\mathcal{G}\left(  \chi,u;\cdot\right)$
		if $\mu\left(  A\right)  \leq\mathcal{G}\left(\chi,u;A\right)  
		\leq \mu\left(  \overline{A}\right)$ for all open sets $A\subset\Omega.$
	\end{enumerate}
\end{definition}

\section{Main Result}
 
This section is devoted to the proof of Theorem \ref{main}. For the readers' convenience, we restate it here.

\medskip
\noindent{\bf Theorem 1.1}
{\it Let $\Omega \subset \Rb^N$ be a bounded, open extension domain.
Consider $p, q$ such that \eqref{newpqn} holds, let $f$ be defined as in 
\eqref{density}, satisfying \eqref{growth2}, \eqref{growth3} and \eqref{fconvex}.
Assume $\chi$ is the characteristic function of an open, connected set of finite perimeter $E \subset \subset \Omega$ such that $E$ and $\Omega \setminus \overline E$ satisfy \eqref{goodset} and let $u \in W^{1,p}(\Omega;\mathbb R^d)$
be such that $u \in W^{1,q}(\Omega \setminus \overline E;\mathbb R^d)$.
Then, 
\begin{equation}\nonumber
{\cal F}_{\rm loc}(\chi, u;\Omega)= {\cal F}(\chi, u;\Omega) = \int_{\Omega}f(\chi(x),\nabla u(x)) \, dx + |D \chi|(\Omega)  = F(\chi,u).
\end{equation}}

\begin{remarks}{\rm 
Hypothesis \eqref{growth2}, \eqref{growth3}, and the requirements placed on $u$, ensure that $F(\chi,u) < + \infty$ and so, from the upper bound inequality proved below, it follows that
$\mathcal{F}_{\operatorname*{loc}}\left(  \chi,u;\Omega\right) < + \infty$.

On the other hand, the conditions $E \in \mathcal{O}(\Omega), E \subset \subset \Omega$ and \eqref{goodset}, yield
$$\mathcal H^{N-1}(\partial E) = P(E; \Omega) = P(E; \Rb^N) = |D\chi|(\Omega) = 
|D\chi|(\Rb^N).$$
We also point out that, given the nature of the problem (see \eqref{F}, \eqref{growth2} and \eqref{growth3}), the assumptions made on $u$ depend on the set $E$ so, in the above integral representation result, the fields $\chi$ and $u$ are not independent of each other.

Since for the proof of the upper bound we use recovery sequences which are in $W^{1,q}(\Omega;\mathbb R^d)$, we show both that $\mathcal F(\chi, u;\Omega)$ admits an integral representation, and that it coincides with 
$\mathcal F_{\rm loc}(\chi, u;\Omega)$ for which $\nu^s$ in \eqref{Flocrep} vanishes.}

\end{remarks}

\begin{proof}[{\it Proof of Theorem \ref{main}}]
We obtain the characterisation of ${\cal F}_{\rm loc}(\chi, u;\Omega)$ and
${\cal F}(\chi, u;\Omega)$ directly, by proving a double inequality.

Due to the convexity hypothesis, the proof of the lower bound follows as in the second part of the proof of Theorem 4.3 in \cite{BZ} by means of Theorem \ref{thm2.4ABFvariant}.

To prove the upper bound we need to construct sequences 
$u_n \in W^{1,q} \left(\Omega;\mathbb{R}^{d}\right)$,
$\chi_{n}  \in BV\left(\Omega;\left\{0,1\right\}  \right)$ such that $u_{n}\rightharpoonup u\text{ in }W^{1,p}\left(\Omega;\mathbb{R}^{d}\right)$, 
$\chi_{n}\overset{\ast}{\rightharpoonup}\chi\text{ in }
BV\left(\Omega;\left\{  0,1\right\}  \right)$ and
$\displaystyle \liminf_{n\to+\infty}F(\chi_n,u_n) \leq  F(\chi,u)$.

Given that $\Omega$ is an extension domain and that 
$u \in W^{1,q}(\Omega \setminus \overline E; \Rb^d)$ we can extend $u$ as a $W^{1,q}$ function to $\Rb^N \setminus \Omega$, with an abuse of notation we still denote this function by $u$. The hypothesis on the set $E$ allows us to apply Theorem \ref{Schmidt} twice to obtain, for each $\eps > 0$, open sets 
$E_{2\eps} \subset \subset E_{\eps} \subset \subset E$ such that
$\partial E_{2\eps}$ and $\partial E_{\eps}$ are smooth,
\begin{equation}\label{per}
\lim_{\varepsilon \to 0^+}P(E_{2\eps};\Omega) = 
\lim_{\varepsilon \to 0^+}P(E_{\eps};\Omega) = P(E;\Omega),
\end{equation}
and 
\begin{equation}\label{meas}
\lim_{\varepsilon \to 0^+}\LL^N(E_{2\eps}) = 
\lim_{\varepsilon \to 0^+}\LL^N(E_{\eps}) = \LL^N(E).
\end{equation}
Denote by $L_\eps$ the open layer between $E_\eps$ and $E_{2\eps}$, 
$L_\eps := E_\eps \setminus \overline{E_{2\eps}}$, so that, by \eqref{meas},
\begin{equation}\label{measL}
\lim_{\varepsilon \to 0^+}\LL^N(L_{\eps}) = 0. 
\end{equation}
We may also consider $\Omega_\eps \subset \subset \Omega \setminus \overline E$ an inner approximation of $\Omega \setminus \overline E$ such that 
$\partial \Omega_{\eps}$ is smooth and 
\begin{equation}\label{outerlayer}
\lim_{\varepsilon \to 0^+}P(\Omega_{\eps};\Rb^N) 
= P(\Omega \setminus \overline E;\Rb^N), \; \; 
\lim_{\varepsilon \to 0^+}\LL^N(\Omega_{\eps}) = \LL^N(\Omega \setminus \overline E).
\end{equation}
Denote by $F_\eps$ the layer between $\Omega_\eps$ and $E$, notice that conditions \eqref{outerlayer} imply that
\begin{equation}\label{Feps}
\lim_{\varepsilon \to 0^+}P(F_{\eps};\Omega) = P(E;\Omega), \; \; 
\lim_{\varepsilon \to 0^+}\LL^N(F_{\eps}) = 0.
\end{equation}
This inner approximation construction is important to ensure the existence of the layer $L_\eps$, where we will connect two different regular sequences both converging to $u$ in $L^p$, and of the layer $F_\eps$, separating the regions where $f$ has different growth from above and where $u$ has different integrability properties.

Let $\{\rho_j\}_{j \in \Nb}$ be the usual sequence of standard mollifiers and, for each $\eps > 0$, consider the convolutions given by 
$u_{\eps,j} := u\cdot \chi_{E_{\eps}} \star \rho_j$  
and $\widetilde{u}_{\varepsilon,j}
:= u \cdot \chi_{\Omega \setminus \overline E_{2 \eps}}\star \rho_j$.
Notice that both ${u}_{\varepsilon,j}$ and $\widetilde{u}_{\varepsilon,j}$ converge
strongly to $u$ in $W^{1,p}(L_\eps;\mathbb R^d)$ and that, for $j$ large enough and
$ x \in \Omega \setminus (E \cup F_\eps)$, 
$B(x,\frac{1}{j}) \subset \Omega \setminus \overline E$ where $u \in W^{1,q}$ so that
$\widetilde{u}_{\varepsilon,j}$ converges, as $j \to + \infty$, to $u$ in  
$W^{1,q}(\Omega \setminus (\overline{E \cup F_\eps});\mathbb R^d)$.

Partition $L_{\eps}$ into $T_{\eps,j}$ $\in \mathbb N$ pairwise disjoint layers 
$$
L^{i}_{\eps,j}
:=\big\{x\in L_{\eps}:\,  \delta_{i-1}<\dist\big(x, \partial E_\eps  \big)\leq \delta_{i}\big\},
\qquad i=1, \dots, T_{\eps,j},
$$
of constant width 
$\delta_i-\delta_{i-1}
=\|u_{\eps,j}- \widetilde{u}_{\eps,j}\|_{L^p(L_{\eps};\Rb^d)}^{1/p}$,
with $\delta_0=0$ and $\delta_{T_{\eps,j}}=O(\eps)$, 
so that
\begin{equation}\label{Mlayers}
T_{\eps,j} \|u_{\eps,j}- \widetilde{u}_{\eps,j}\|_{L^p(L_{\eps};\Rb^d)}^{1/p}
=O(\eps),
\end{equation}
and, since $u_{\varepsilon, j}, \widetilde{u}_{\varepsilon, j} \in 
W^{1,p}(\Omega;\mathbb R^d)$,
\begin{align*}
&\displaystyle\sum_{i=1}^{T_{\eps,j}} \int_{L^{i}_{\eps,j}} 
\bigg(1 + \|\nabla u_{\eps,j}(x)-\nabla \widetilde{u}_{\eps,j}(x)\|^p 
+ \|\nabla \widetilde{u}_{\eps,j}(x)\|^p + \frac{|u_{\eps,j}(x)-\widetilde{u}_{\eps,j}(x)|^p}
{\|u_{\eps,j}-\widetilde{u}_{\eps,j}\|_{L^p(L_{\eps};\Rb^d)} } \bigg)\, dx \\
&\hspace{0.5cm}=\int_{L_{\eps}} 
\bigg(1 + \|\nabla u_{\eps,j}(x)-\nabla \widetilde{u}_{\eps,j}(x)\|^p 
+ \|\nabla \widetilde{u}_{\eps,j}(x)\|^p + \frac{|u_{\eps,j}(x)-\widetilde{u}_{\eps,j}(x)|^p}
{\|u_{\eps,j}-\widetilde{u}_{\eps,j}\|_{L^p(L_{\eps};\Rb^d)} } \bigg)\, dx.
\end{align*}
Thus, there exists $i_*=i_*(\eps,j)$ such that
\begin{align}\nonumber
&\int_{L^{i_*}_{\eps,j}} 
\bigg(1 + \|\nabla u_{\eps,j}(x)-\nabla \widetilde{u}_{\eps,j}(x)\|^p 
+ \|\nabla \widetilde{u}_{\eps,j}(x)\|^p + \frac{|u_{\eps,j}(x)-\widetilde{u}_{\eps,j}(x)|^p}
{\|u_{\eps,j}-\widetilde{u}_{\eps,j}\|_{L^p(L_{\eps};\Rb^d)} } \bigg)\, dx \\
&\hspace{0.5cm} \leq \frac{1}{T_{\eps,j}} \int_{L_{\eps}} 
\bigg(1 + \|\nabla u_{\eps,j}(x)-\nabla \widetilde{u}_{\eps,j}(x)\|^p 
+ \|\nabla \widetilde{u}_{\eps,j}(x)\|^p + \frac{|u_{\eps,j}(x)-\widetilde{u}_{\eps,j}(x)|^p}
{\|u_{\eps,j}-\widetilde{u}_{\eps,j}\|_{L^p(L_{\eps};\Rb^d)} } \bigg)\, dx.
\label{istar}
\end{align}

Consider cut-off functions $\varphi_{\eps,j}\in C^\infty_c(\Omega; [0,1])$ such that
\begin{equation*}\label{ncut-offc}\begin{split}
& \varphi_{\eps,j}=1 \qquad \text{ in}\; \;  E_{2\eps} \cup 
\Bigg(\bigcup_{i=i_{*} + 1}^{T_{\eps,j}} L^{i}_{\eps,j}\Bigg)=:  A_{\eps,j},\\
& \varphi_{\eps,j}=0 \qquad \text{ in}\; \;
\Omega \setminus \Big(A_{\eps,j} \cup  L^{i_{*}}_{\eps,j}\Big),
\vspace{.2truecm}
\end{split}\end{equation*}
and 
\begin{equation}\label{grad_cutoff}
\| \nabla \varphi_{\eps,j}\|_\infty
=O\Big(\|u_{\eps,j} - \widetilde{u}_{\eps,j}\|_{L^p(L_{\eps};\Rb^d)}^{-1/p}\Big).
\end{equation}
Define
\begin{equation}\label{wnova}
w_{\eps,j}(x):= \varphi_{\eps,j}(x) u_{\eps,j}(x) +(1-\varphi_{\eps,j}(x))\widetilde{u}_{\eps,j}(x),
\; \; x\in \Omega,
\end{equation}   
and notice that 
\begin{equation}\label{nablaw}
\nabla w_{\eps,j} 
= \varphi_{\eps,j} (\nabla u_{\eps,j} -\nabla \widetilde{u}_{\eps,j}) 
+ \nabla \widetilde{u}_{\eps,j} + (u_{\eps,j} - \widetilde{u}_{\eps,j}) \otimes \nabla \varphi_{\eps,j} \; \; {\rm in} \; \;  L^{i_*}_{\eps,j}.
\end{equation} 
Then
$w_{\eps,j}\in  W^{1,q}(\Omega;\Rb^d)$,
and by properties of the convolution we have that, 
for each $\eps > 0$,
\begin{align*}
\lim_{j\to+\infty} \|  w_{\eps,j}-u\|_{L^p(\Omega;\Rb^d)}^p \leq
\lim_{j\to+\infty}\left[\int_{A_{\eps,j}}| u_{\eps,j}(x) - u(x) |^p \, dx + 
\int_{L^{i_*}_{\eps,j}}| u_{\eps,j}(x) - u(x) |^p \, dx  \right.\\
\left.+ \int_{L^{i_*}_{\eps,j}}| \widetilde{u}_{\eps,j}(x) - u(x) |^p \, dx +\int_{\Omega \setminus (A_{\eps,j} \cup L^{i_*}_{\eps,j})}| \widetilde{u}_{\eps,j}(x) - u(x) |^p \, dx \right]= 0,
\end{align*}
since $0 \leq \varphi_{\eps,j} \leq 1$ and using the fact that, as $j \to +\infty$,
$u_{\eps,j}$ converges to $u$ in $L^p(E_\eps;\Rb^d)$ and  
$\widetilde{u}_{\eps,j}$ converges to $u$ in 
$L^p(\Omega \setminus \overline{E_{2\eps}};\Rb^d)$.
Furthermore, taking the sequence $\chi_{\eps}$ to be the characteristic function of the set $E \cup F_\eps$, it follows that
\begin{align}\label{decomp}
& \nonumber
\int_{\Omega} \chi_{\eps}(x)\, W_1(\nabla w_{\eps,j}(x)) + 
(1 -\chi_{\eps}(x))\, W_0(\nabla w_{\eps,j}(x)) \, dx + |D\chi_{\eps}|(\Omega)
\\ \nonumber
&\qquad = \int_{A_{\eps,j}} W_1(\nabla u_{\eps,j}(x))\, dx +
\int_{\bigcup_{i=1}^{i_* - 1}L^i_{\eps,j} \cup (E\setminus E_\eps) \cup F_\eps} 
W_1(\nabla \widetilde{u}_{\eps,j}(x))\, dx + 
\int_{L^{i_*}_{\eps,j}} W_1(\nabla w_{\eps,j}(x))\, dx \\ \nonumber
&\qquad \qquad  
+ \int_{\Omega \setminus (E \cup F_\eps)} W_0(\nabla \widetilde{u}_{\eps,j}(x))\, dx
+|D\chi_\eps|(\Omega)
\\ \nonumber
&\qquad \leq 
\int_{A_{\eps,j}}  W_1(\nabla u_{\eps,j}(x))\, dx + 
\int_{L_\eps \cup (E \setminus E_{\eps}) \cup F_\eps} W_1(\nabla \widetilde{u}_{\eps,j}(x))\, dx + \int_{L^{i_*}_{\eps,j}} W_1(\nabla w_{\eps,j}(x))\, dx
\\ 
&\qquad \qquad + 
\int_{\Omega \setminus (E \cup F_\eps)} W_0(\nabla \widetilde{u}_{\eps,j}(x))\, dx  + |D\chi_\eps|(\Omega).
\end{align}
By \eqref{growth2}, \eqref{meas}, \eqref{measL}, \eqref{Feps}, and since 
$\displaystyle \lim_{j\to +\infty}\|\widetilde{u}_{\eps,j} - u\|_{W^{1,p}(\Omega \setminus \overline{E_{2\eps}}) }= 0$,
\begin{align}\label{W10}
&\lim_{\varepsilon \to 0^+}\lim_{j\to+\infty}
\int_{L_{\eps} \cup (E\setminus E_\eps)\cup F_\eps}  W_1(\nabla \widetilde{u}_{\eps,j}(x))\, dx \leq \lim_{\varepsilon \to 0^+}\lim_{j\to+\infty} 
C \int_{L_{\eps} \cup (E\setminus E_\eps)\cup F_\eps} 
1 + \|\nabla \widetilde{u}_{\eps,j}(x)\|^p \, dx 
\nonumber \\ 
&\hspace{2cm}\leq \lim_{\varepsilon \to 0^+}\lim_{j\to+\infty} 
C \int_{L_{\eps} \cup (E\setminus E_\eps)\cup F_\eps} 
1 + \|\nabla \widetilde{u}_{\eps,j}(x) - \nabla u(x)\|^p + \|\nabla u(x)\|^p \, dx 
= 0.
\end{align}
By \eqref{Feps}
\begin{equation}\label{nconvper}
\lim_{\varepsilon \to 0^+}|D\chi_{\eps}|(\Omega) = |D\chi|(\Omega).
\end{equation}
On the other hand, by \eqref{growth2}, \eqref{nablaw}, \eqref{grad_cutoff}, \eqref{istar} and \eqref{Mlayers}, we have
\begin{align}
& \liminf_{j\to+\infty}\int_{L^{i_*}_{\eps,j}} W_1(\nabla w_{\eps,j}(x))\, dx
\leq \liminf_{j\to+\infty}\beta_1\int_{L^{i_*}_{\eps,j}} 
\big(1 + \|\nabla w_{\eps,j}(x)\|^p\big)\, dx \nonumber \\ \nonumber
& \leq C\limsup_{j\to+\infty}\int_{L^{i_*}_{\eps,j}} \hspace{-0,17cm}
\bigg(1 + |\varphi_{\eps,j}|^p\|\nabla u_{\eps,j}-\nabla \widetilde{u}_{\eps,j}\|^p 
+ \|\nabla \widetilde{u}_{\eps,j}\|^p + |u_{\eps,j}-\widetilde{u}_{\eps,j}|^p \cdot \|\nabla \varphi_{\eps,j}\|^p \bigg) dx \\ \nonumber
& \leq \limsup_{j\to+\infty}\frac{C}{T_{\eps,j}}\int_{L_{\eps}} \hspace{-0,15cm}
\bigg(1 + \|\nabla u_{\eps,j}(x)-\nabla \widetilde{u}_{\eps,j}(x)\|^p 
+ \|\nabla \widetilde{u}_{\eps,j}(x)\|^p + \frac{|u_{\eps,j}(x)-\widetilde{u}_{\eps,j}(x)|^p}{\|u_{\eps,j}-\widetilde{u}_{\eps,j}\|_{L^p(L_{\eps};\Rb^d)} } \bigg) \, dx 
\\ \nonumber
& \leq 
\limsup_{j\to+\infty}
\frac{C}{\eps}
\|u_{\eps,j}-\widetilde{u}_{\eps,j}\|^{\frac{1}{p}}_{L^p(L_{\eps};\Rb^d)}
\bigg[\int_{L_{\eps}} \hspace{-0,15cm}
\bigg(1 + \|\nabla u_{\eps,j}-\nabla \widetilde{u}_{\eps,j}\|^p 
+ \|\nabla \widetilde{u}_{\eps,j}\|^p \bigg)\, dx + \|u_{\eps,j}-\widetilde{u}_{\eps,j}\|^{p-1}_{L^p(L_{\eps};\Rb^d)}\bigg]
\\ \label{smallstrip}
& = 0,  
\end{align}
since the sequences $u_{\eps,j}$, $\widetilde{u}_{\eps,j}$ converge strongly to $u$ in $L^p(L_{\eps};\Rb^d)$, as $j\to+\infty$, and the expression in square brackets is uniformly bounded in $j$.

Finally, using the convexity of $W_0$ and $W_1$, and since the measures $\mu^j_x$ defined by
$$
\left\langle\mu^j_x,\varphi\right\rangle:=
\int_{\mathbb R^N} \rho_{j}(x-y)\varphi(y) \, dy
$$
are probability measures, Jensen's inequality yields 
\begin{align}\label{W1cx}
\liminf_{j \to +\infty} 
\int_{A_{\eps,j}}W_1(\nabla u_{\eps,j}(x)) \, dx &=	
\liminf_{j \to +\infty} \int_{A_{\eps,j}}W_1(\nabla (u \star \rho_{j}(x))) \, dx
\nonumber \\
\leq \liminf_{j\to +\infty}
\int_{E_{\eps}}W_1\left(\left\langle\mu^j_x, \nabla u\right\rangle\right) \, dx &\leq
\limsup_{j \to +\infty} \int_{E_{\eps}}
\left\langle\mu^j_x, W_1(\nabla u)\right\rangle \, dx 
\nonumber \\
& = \int_{E_{\eps}}W_1(\nabla u (x)) \, dx 
\end{align}
and this last integral converges, as $\eps \to 0^+$, to 
$\displaystyle \int_{E}W_1(\nabla u (x)) \, dx.$

Similarly, using properties of the convolution, \eqref{growth3} and \eqref{Feps}
we obtain
\begin{equation}\label{W0cx}
\liminf_{\eps \to 0^+}\liminf_{j \to +\infty} 
\int_{\Omega \setminus (E\cup F_\eps)} W_0(\nabla \widetilde{u}_{\eps,j}(x))\, dx \leq
\int_{\Omega \setminus E} W_0(\nabla u(x))\, dx.
\end{equation}
Hence, by \eqref{decomp}, \eqref{W10}, \eqref{nconvper}, \eqref{smallstrip},
\eqref{W1cx} and \eqref{W0cx}, we conclude that
\begin{align*}
& \liminf_{\eps \to 0^+} \liminf_{j\to+\infty} \left [\int_{\Omega}\chi_{\eps}(x)\, W_1(\nabla w_{\eps,j}(x)) + 
(1 -\chi_{\eps}(x))\, W_0(\nabla w_{\eps,j}(x)) \, dx + |D\chi_{\eps}|(\Omega)
\right] \\
& \leq \int_{\Omega}\chi(x)\, W_1(\nabla u(x)) + 
(1 -\chi(x))\, W_0(\nabla u(x)) \, dx + |D\chi|(\Omega) = F\left(\chi,u\right).
\end{align*}
Thus, the conclusion follows by a standard diagonalisation argument.
\end{proof}

\begin{remarks}\label{otherhyp}{\rm 
{\bf 1)} In order to conclude the upper bound, it is possible to consider a constant sequence 
$\chi_\eps = \chi$, the characteristic function of the set $E$, provided we require
more regularity of the function $u$ in a slightly larger set, namely
$u \in W^{1,q}(\Omega \setminus \widehat E;\Rb^d)$, 
where $\widehat E$ is a compact set such that $\widehat E \subset E$. Indeed, under these conditions, $\widetilde{u}_{\varepsilon,j}$ converges, as $j \to + \infty$, to $u$ in  
$W^{1,q}(\Omega \setminus \overline E;\mathbb R^d)$ which allows us to reason as in \eqref{W0cx}. The terms of the energy $F(\chi_\eps,w_{\eps,j})$ have to be adjusted to this choice of admissible sequence $\chi_\eps$ but they can be treated in a similar fashion to \eqref{W10}, \eqref{smallstrip} and \eqref{W1cx}.

{\bf 2)}  
The proof of the upper bound also holds without the assumption of convexity on $W_0$.
In this case, we apply the result of Schmidt twice in 
$\Omega \setminus \overline E$ to obtain sets with smooth boundary 
$\Omega_{2\eps} \subset \subset \Omega_{\eps} \subset \subset \Omega \setminus \overline E$ such that 
$\displaystyle \lim_{\varepsilon \to 0^+}\LL^N(\Omega_{2\eps}) = 
\lim_{\varepsilon \to 0^+}\LL^N(\Omega_{\eps}) =
\LL^N(\Omega \setminus \overline E)$ and 
$\displaystyle \lim_{\varepsilon \to 0^+}P(\Omega_{2\eps};\Rb^N) = 
\lim_{\varepsilon \to 0^+}P(\Omega_{\eps};\Rb^N) =
P(\Omega \setminus \overline E;\Rb^N)$. We denote by $F_\eps$ the layer between $E$ and $\Omega_{\eps}$, we let $L_\eps$ be the inner layer between $\Omega_{\eps}$
and $\Omega_{2\eps}$ and we apply the 
previous slicing argument to $L_\eps$. 
We now denote by 
$u_{\eps,j} := (u \cdot \chi_{E \cup F_\eps \cup L_\eps}) \star \rho_j$.
Since in this case we are partitioning in the complement of $E$, where by hypothesis $u \in W^{1,q}$, we may replace the convolution $\widetilde{u}_{\eps,j}$ with $u$ in the convex combination \eqref{wnova} and this yields an admissible sequence for $\mathcal F(\chi, u;\Omega)$. In this setting, the constant width of each $L^i_{\eps,j}$ is given by 
$\|u_{\eps,j} - u\|_{L^q(L_\eps;\Rb^d)}^{1/q}$ and the optimal transition layer satisfies the equivalent of \eqref{istar} with the $p$ exponents being replaced by $q$. Taking, once again, the sequence $\chi_\eps$ to be the characteristic function of the set $E \cup F_\eps$, the computation of the energy $F(\chi_\eps,w_{\eps,j})$ is bounded above by the terms 
\begin{align*}
\int_EW_1(\nabla u_{\eps,j}(x)) \, dx &+ 
\int_{F_\eps}W_1(\nabla u_{\eps,j}(x)) \, dx +
\int_{L_{\eps}}W_0(\nabla u_{\eps,j}(x)) \, dx \\
&+ \int_{L^{i_*}_{\eps,j}}W_0(\nabla w_{\eps,j}(x)) \, dx + 
\int_{\Omega \setminus E}W_0(\nabla u(x)) \, dx + |D\chi_\eps|(\Omega).
\end{align*}
The first of these can be estimated as before, using the convexity of $W_1$. In the fourth we reason as in \eqref{smallstrip}, using now the growth condition on $W_0$ and taking into account that $L^{i_*}_{\eps,j}$ was chosen using $q$ exponents.
The fifth and sixth terms yield exactly the expressions needed to obtain $F(\chi,u)$ (see \eqref{nconvper}). Finally, to handle the third term, the convexity of $W_0$ is not required since we can use the growth assumption \eqref{growth3} to obtain
\begin{align*}
&\liminf_{\eps \to 0^+}\liminf_{j \to +\infty} 
\int_{L_\eps} W_0(\nabla u_{\eps,j}(x))\, dx \leq
\liminf_{\eps \to 0^+}\liminf_{j \to +\infty}
\int_{L_\eps} C 
\big(1 + \|\nabla u_{\eps,j}(x)\|^q \big)\, dx \\
& \hspace{2.3cm} \leq \liminf_{\eps \to 0^+}\liminf_{j \to +\infty} 
\int_{L_\eps} C 
\big(1 + \|\nabla u_{\eps,j}(x) - \nabla u(x)\|^q  + 
\|\nabla u(x)\|^q\big)\, dx = 0,
\end{align*}
where we have also used the fact that 
$\displaystyle \lim_{\varepsilon \to 0^+}\LL^N(L_\eps) = 0$ and properties of the convolution, and the second term is estimated in a similar fashion using \eqref{growth2}.

In this case it is also possible to work with a fixed $\chi_\eps = \chi$, if
$u \in W^{1,q}(\Omega \setminus \widehat E;\Rb^d)$, 
where $\widehat E$ is a compact set such that $\widehat E \subset E$, which renders the buffer layer $F_\eps$ unnecessary.

{\bf 3)} Taking into account the preceding remark ,
the conclusions of the above theorem remain valid if the convexity of the density $W_0$ is replaced with the weaker requirement that $W_0$ be closed $W^{1,p}$-quasiconvex (cf. \cite[Definition 2]{K}, \cite{P}). Indeed, as mentioned above, the proof of the upper bound is unaffected by this weaker assumption whereas,
since every convex function is closed $W^{1,p}$-quasiconvex, the closed 
$W^{1,p}$-quasiconvexity of $W_0$ entails that of $f(b, \cdot)$, $\forall b \in \{0,1\}$, and so the lower bound is a consequence of Proposition 4.7 in \cite{BZ}. We also point out that under the growth condition \eqref{growth3}, if $W_0$ is closed $W^{1,p}$-quasiconvex then it is also quasiconvex (see \cite[Remark 2.2, Proposition 2.4, Corollary 3.2]{BM} and \cite[Corollary 3.4]{K}).
}

\end{remarks}

\begin{remark}\label{volfrac}{\rm 
The conclusion of Theorem \ref{main} also holds if one prescribes the volume fraction of each phase, provided $u \in W^{1,q}(\Omega \setminus \widehat E;\Rb^d)$, where $\widehat E$ is a compact set such that $\widehat E \subset E$.
Precisely, given $0 < \theta < 1$ and 
$\chi \in BV\left(\Omega;\left\{  0,1\right\}  \right)$ such that
$\displaystyle \frac{1}{\LL^N(\Omega)}\int_\Omega \chi(x) \, dx =\theta$, we define
\begin{align}
\mathcal{F}_{\rm vol}\left(\chi,u;\Omega\right)   &  :=\inf\left\{  
\underset{n\rightarrow +\infty}{\lim\inf}\,
F\left(\chi_{n},u_{n};\Omega\right)  :  u_{n}
\in W^{1,q}\left(\Omega;\mathbb{R}^{d}\right),
\chi_{n}
\in BV\left(\Omega;\left\{  0,1\right\}  \right),  
\right. \nonumber\\
&  \hspace{1cm} \left.  u_{n}\rightharpoonup u\text{ in }
W^{1,p}\left(\Omega;\mathbb{R}^{d}\right),
\chi_{n}\overset{\ast}{\rightharpoonup}\chi\text{ in }
BV\left(\Omega;\left\{  0,1\right\}  \right),  
\frac{1}{\LL^N(\Omega)}\int_\Omega \chi_n(x) \, dx =\theta \right\}.\nonumber
\end{align}
Under the hypotheses of Theorem \ref{main} it follows that
\begin{equation}\nonumber
{\cal F}_{\rm vol}(\chi, u;\Omega) = 
\int_{\Omega}f(\chi(x),\nabla u(x)) \, dx + |D \chi|(\Omega),
\end{equation}
for every $\chi$ characteristic function of an open, connected set of finite perimeter $E \subset \subset \Omega$ such that $E$ and $\Omega \setminus \overline E$ satisfy \eqref{goodset} and $\LL^N(E)= \theta\LL^N(\Omega)$,  and for every 
$u \in W^{1,p}(\Omega;\mathbb R^d)\cap W^{1,q}(\Omega \setminus \widehat E;\mathbb R^d)$.
Indeed, as pointed out in Remarks \ref{otherhyp} {\bf 1)}, under this assumption the constant sequence $\chi_\eps = \chi$ satisfies the desired volume constraint so it is admissible for 
$\mathcal{F}_{\rm vol}\left(\chi,u;\Omega\right)$. Therefore the upper bound follows from the previous proof, whereas the lower bound is obvious.}
\end{remark}

\begin{corollary}\label{Qconv}
Let $\Omega \subset \Rb^N$ be a bounded, open extension domain.
Consider $p, q$ such that \eqref{newpqn} holds,
let $f$ be defined as in 
\eqref{density}, satisfying \eqref{growth2} and \eqref{growth3}.
Assume further that $QW_1$ and $QW_0$, the quasiconvex envelopes of $W_1$ and $W_0$, respectively, are convex functions.
Let $\chi$ be the characteristic function of an open, connected set of finite perimeter
$E \subset \subset \Omega$ such that $E$ and $\Omega \setminus \overline E$ satisfy \eqref{goodset} and let $u \in W^{1,p}(\Omega;\mathbb R^d)$
be such that $u \in W^{1,q}(\Omega \setminus \overline{E};\mathbb R^d)$.
Then, 
\begin{equation}\nonumber
{\cal F}_{\rm loc}(\chi, u;\Omega)= {\cal F}(\chi, u;\Omega) = \int_{\Omega}\chi(x)\, QW_1(\nabla u(x)) + (1 -\chi(x))\,QW_0(\nabla u(x)) \, dx 
+ |D \chi|(\Omega).
\end{equation}
\end{corollary}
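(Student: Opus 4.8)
The plan is to reduce the statement to Theorem~\ref{main} applied to the quasiconvexified integrand, adding one extra relaxation step in the construction of the recovery sequence. Put $\widetilde W_i:=QW_i$ ($i=0,1$) and $\widetilde f(b,\xi):=b\,QW_1(\xi)+(1-b)\,QW_0(\xi)$. Since $0\le QW_i\le W_i$, the densities $\widetilde W_i$ are non-negative and still satisfy \eqref{growth2}, \eqref{growth3}; being convex and finite, they are continuous; and $\widetilde f(b,\cdot)$ is convex for $b\in\{0,1\}$. Thus $\widetilde f$ has the form \eqref{density} and fulfils all the hypotheses of Theorem~\ref{main}, so, denoting by $\widetilde{\mathcal F},\widetilde{\mathcal F}_{\rm loc}$ the functionals \eqref{introrelaxed}, \eqref{introrelaxedloc} built from $\widetilde f$,
\[
\widetilde{\mathcal F}_{\rm loc}(\chi,u;\Omega)=\widetilde{\mathcal F}(\chi,u;\Omega)=\int_\Omega \widetilde f(\chi,\nabla u)\,dx+|D\chi|(\Omega)=:R,
\]
and $R$ coincides with the right-hand side of the asserted identity.

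For the lower bound, note that $\widetilde f\le f$ pointwise, hence $\widetilde F(\chi_n,u_n;\Omega)\le F(\chi_n,u_n;\Omega)$ for every admissible pair; since the classes of competing sequences defining the two pairs of functionals are the same, we obtain $\widetilde{\mathcal F}_{\rm loc}\le\mathcal F_{\rm loc}$ and $\widetilde{\mathcal F}\le\mathcal F$, and moreover $\mathcal F_{\rm loc}\le\mathcal F$ because the competitor class for $\mathcal F_{\rm loc}$ is larger. Therefore $R\le\mathcal F_{\rm loc}(\chi,u;\Omega)\le\mathcal F(\chi,u;\Omega)$, and it remains to prove the reverse inequality $\mathcal F(\chi,u;\Omega)\le R$.

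For the upper bound I revisit the recovery sequence in the proof of Theorem~\ref{main}, keeping the sets $E_{2\eps}\subset\subset E_\eps\subset\subset E$, the inner approximation $\Omega_\eps$ of $\Omega\setminus\overline E$, the layers $F_\eps$ and $L_\eps$, the partition of $L_\eps$ into the thin layers $L^i_{\eps,j}$ with optimal transition layer $L^{i_*}_{\eps,j}$, the cut-off $\varphi_{\eps,j}$, the set $\chi_\eps:=\chi_{E\cup F_\eps}$, and the mollified maps $u_{\eps,j},\widetilde u_{\eps,j},w_{\eps,j}$ unchanged. In that proof, convexity of $W_1$ is used only in the Jensen estimate \eqref{W1cx} on the inner region $A_{\eps,j}$, and convexity of $W_0$ only in the step leading to \eqref{W0cx} on $\Omega\setminus(E\cup F_\eps)$, while \eqref{W10}, \eqref{smallstrip}, \eqref{nconvper} use only the growth bounds \eqref{growth2}, \eqref{growth3}. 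I thus modify $w_{\eps,j}$ only on the smooth sets $E_{2\eps}$ and (a smooth open subset of nearly full measure of) $\Omega\setminus(E\cup F_\eps)$, which sit in the interiors of $A_{\eps,j}$, resp. of $\Omega\setminus(E\cup F_\eps)$, well away from the transition layer and the buffer $F_\eps$: on $E_{2\eps}$, where $u_{\eps,j}$ is smooth, the relaxation characterisation of $QW_1$ (Definition~\ref{qcxenv}) provides compactly supported Lipschitz oscillations $\psi_{\eps,j,k}$ with $\int_{E_{2\eps}}W_1(\nabla u_{\eps,j}+\nabla\psi_{\eps,j,k})\,dx\to\int_{E_{2\eps}}QW_1(\nabla u_{\eps,j})\,dx$ as $k\to\infty$; since $QW_1$ is now convex, Jensen's inequality applied to the mollification gives, exactly as in \eqref{W1cx}, $\int_{E_{2\eps}}QW_1(\nabla u_{\eps,j})\,dx\le\int_{E_{2\eps}}\bigl(QW_1(\nabla u)\bigr)\star\rho_j\,dx\longrightarrow\int_E QW_1(\nabla u)\,dx$ as $j\to\infty$, $\eps\to0^+$, using $QW_1(\nabla u)\in L^1(E)$ (by \eqref{growth2}). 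An analogous modification on the $W_0$-region, now using the relaxation characterisation of $QW_0$ together with the continuity and $q$-growth of $QW_0$ and $\widetilde u_{\eps,j}\to u$ in $W^{1,q}(\Omega\setminus(\overline{E\cup F_\eps});\Rb^d)$, yields a contribution converging to $\int_{\Omega\setminus E}QW_0(\nabla u)\,dx$. The leftover sets $A_{\eps,j}\setminus E_{2\eps}\subset L_\eps$ and the thin complement inside $\Omega\setminus(E\cup F_\eps)$, of Lebesgue measure vanishing by \eqref{measL}, \eqref{Feps}, as well as the transition-layer term, are controlled by the growth bounds as in \eqref{smallstrip}, and the perimeter term behaves as in \eqref{nconvper}. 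A diagonal extraction over $\eps,j,k$ then produces an admissible sequence $(\chi_n,u_n)$ with $u_n\in W^{1,q}(\Omega;\Rb^d)$, $u_n\rightharpoonup u$ in $W^{1,p}(\Omega;\Rb^d)$, $\chi_n\overset{\ast}{\rightharpoonup}\chi$ in $BV$, and $\liminf_n F(\chi_n,u_n;\Omega)\le R$; together with the lower bound this gives $\mathcal F_{\rm loc}(\chi,u;\Omega)=\mathcal F(\chi,u;\Omega)=R$.

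The main obstacle is splicing the quasiconvexifying oscillations into the already delicate construction of Theorem~\ref{main} without disturbing it: one has to check that their supports lie strictly inside the smooth subdomains (so the cut-off gluing across $L^{i_*}_{\eps,j}$ and the perimeter convergence \eqref{nconvper} are untouched), that the modified maps stay in the competitor class — i.e. remain bounded in $W^{1,p}(\Omega;\Rb^d)$ with $u_n\rightharpoonup u$, so that $\|\psi_{\eps,j,k}\|_{L^p}$ and the gradient norms are kept under control while the quasiconvexified energy is still attained — and that the triple limit $\eps\to0^+$, $j\to\infty$, $k\to\infty$ can be arranged consistently; the underlying relaxation statement for the quasiconvex envelope of a smooth map is classical.
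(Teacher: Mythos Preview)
Your argument is correct and is essentially the route the paper itself takes (and explicitly flags as an alternative after its proof): reduce to Theorem~\ref{main} applied to $Qf$, then insert one extra relaxation layer passing from $W_i$ to $QW_i$. The only organisational difference is that the paper performs the quasiconvexification step globally on $\Omega$ via a single relaxation theorem applied to $w_{\eps,j}$ (using that $f(b,\cdot)$ has overall $q$-growth), whereas you localise the Lipschitz oscillations to the smooth subdomains $E_{2\eps}$ and $\Omega\setminus(E\cup F_\eps)$; both lead to the same Jensen estimates on $QW_1,QW_0$ and the same diagonalisation.
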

\begin{proof}
Let $u_n \in W^{1,q}_{\rm loc}(\Omega;\mathbb R^d)$ and $\chi_n \in BV(\Omega;\{0,1\})$ be such that 
$u_{n}\rightharpoonup u\text{ in }W^{1,p}\left(\Omega;\mathbb{R}^{d}\right)$, 
and $\chi_{n}\overset{\ast}{\rightharpoonup}\chi$ in 
$BV\left(\Omega;\left\{  0,1\right\}\right)$. Then, by the convexity of $QW_1$ and $QW_0$, Ioffe's Theorem \ref{thm2.4ABFvariant} and the lower semicontinuity of the perimeter, we obtain
\begin{align}
& \int_{\Omega}\chi(x)\, QW_1(\nabla u(x)) + (1- \chi(x))\,QW_0(\nabla u(x)) \, dx 
+ |D \chi|(\Omega) \nonumber \\ 
& \leq \liminf_{n\to+\infty}\left(\int_{\Omega}\chi_n(x)\,QW_1(\nabla u_n(x)) 
+ (1- \chi_n(x))\,QW_0(\nabla u_n(x)) \, dx 
+ |D \chi_n|(\Omega)\right) \nonumber \\ 
& \leq \liminf_{n\to+\infty}\left(\int_{\Omega}\chi_n(x)\,W_1(\nabla u_n(x)) 
+ (1- \chi_n(x))\, W_0(\nabla u_n(x)) \, dx 
+ |D \chi_n|(\Omega)\right). \nonumber
\end{align}
Therefore,
$$\int_{\Omega}\chi(x)\, QW_1(\nabla u(x)) + (1- \chi(x))\,QW_0(\nabla u(x)) \, dx 
+ |D \chi|(\Omega) 
\leq {\cal F}_{\rm loc}(\chi, u;\Omega)\leq {\cal F}(\chi,u;\Omega).$$

To prove the reverse inequality, we use the notation established in the proof of Theorem \ref{main} and let $\chi_{\eps}$ and
$w_{\eps,j} \in W^{1,q}(\Omega;\Rb^d)$ be the sequences constructed therein.
Since $\chi_\eps$ takes  only the values 0 or 1 we have
$$\chi_\eps QW_1(\nabla w_{\eps,j}) + (1 - \chi_\eps) QW_0(\nabla w_{\eps,j})
= Qf(\chi_\eps,\nabla w_{\eps,j})$$
where, using the fact that $p \leq q$ and Young's inequality, there exists $C > 0$ such that $f(b,\xi) \leq C(1 + |\xi|^q)$. Therefore,
by standard relaxation results
(cf. \cite{D}, \cite[Theorem 5.4.2]{FL2}) there exists a sequence
$v_{\eps,j,n}\in W^{1,q}(\Omega;\mathbb R^d)$ such that 
$v_{\eps,j,n} \weak w_{\eps,j}$, as $n \to + \infty,$ in 
$W^{1,q}(\Omega;\mathbb R^d)$ and
\begin{align}\label{relax}
&\limsup_{n \to +\infty} \int_{\Omega}f(\chi_\eps(x), \nabla v_{\eps,j,n}(x)) \, dx
+ |D \chi_\eps|(\Omega) \nonumber \\
&=
\int_{\Omega}\chi_\eps(x)\, QW_1(\nabla w_{\eps,j}(x)) + 
(1 -\chi_\eps(x))\, QW_0(\nabla w_{\eps,j}(x)) \, dx + |D\chi_\eps|(\Omega).
\end{align}
As in the previous proof, we estimate the expression in \eqref{relax} by taking into account the definition of $w_{\eps,j}$ in each subset of the decomposition of $\Omega$ given in the proof of Theorem \ref{main}: 
\begin{align*}
& \int_{\Omega}\chi_\eps(x)\, QW_1(\nabla w_{\eps,j}(x)) + 
(1 -\chi_\eps(x))\, QW_0(\nabla w_{\eps,j}(x)) \, dx + |D\chi_\eps|(\Omega) \\ 
& \leq \int_{A_{\eps,j}}QW_1(\nabla u_{\eps,j} (x)) \, dx + 
\int_{\Omega \setminus (E \cup F_\eps)}QW_0(\nabla \widetilde{u}_{\eps,j} (x)) \, dx \\
& + 
\int_{L_{\eps}\cup (E\setminus E_\eps)\cup F_\eps}QW_1(\nabla \widetilde{u}_{\eps,j}(x)) \, dx + 
\int_{L^{i_*}_{\eps,j}} QW_1(\nabla w_{\eps,j}(x))\, dx + |D\chi_\eps|(\Omega).
\end{align*}

Since $QW_1 \leq W_1$, arguments similar to those used to obtain \eqref{smallstrip}
and \eqref{W10} give
$$
\limsup_{j\to+\infty}\int_{L^{i_*}_{\eps,j}} QW_1(\nabla w_{\eps,j}(x))\, dx = 0,
$$
and 
$$
\limsup_{\eps \to 0^+}\limsup_{j\to+\infty}\int_{L_{\eps}\cup (E\setminus E_\eps)\cup F_\eps} QW_1(\nabla \widetilde{u}_{\eps,j}(x))\, dx = 0,
$$
whereas the convexity of $QW_1$, $QW_0$, Jensen's inequality and reasoning as in \eqref{W1cx}, \eqref{W0cx} lead to
$$\liminf_{\eps \to 0^+}\liminf_{j\to+\infty}
\int_{A_{\eps,j}}QW_1(\nabla u_{\eps,j}(x)) \, dx \leq
\int_{E}QW_1(\nabla u(x)) \, dx$$
and 
$$\liminf_{\eps \to 0^+}\liminf_{j\to+\infty}
\int_{\Omega \setminus (E \cup F_\eps)}QW_0(\nabla \widetilde{u}_{\eps,j}(x)) \, dx \leq
\int_{\Omega \setminus E}QW_0(\nabla u(x)) \, dx.$$

Taking into account \eqref{nconvper}, \eqref{relax} and applying once again a standard diagonalisation argument, we obtain sequences 
$u_n \in W^{1,q}(\Omega;\mathbb R^d)$ 
and $\chi_n \in BV(\Omega;\{0,1\})$ 
such that 
$u_{n}\rightharpoonup u\text{ in }W^{1,p}\left(\Omega;\mathbb{R}^{d}\right)$,
$\chi_{n}\overset{\ast}{\rightharpoonup}\chi$ in 
$BV\left(\Omega;\left\{  0,1\right\}\right)$ and
\begin{align*}
& \liminf_{n\to+\infty}\left(\int_{\Omega}\chi_n(x)\,W_1(\nabla u_n(x)) 
+ (1- \chi_n(x))\,W_0(\nabla u_n(x)) \, dx 
+ |D \chi_n|(\Omega)\right)  \\ 
& \leq \int_{\Omega}\chi(x)\, QW_1(\nabla u(x)) + (1- \chi(x))\,QW_0(\nabla u(x)) \, dx + |D \chi|(\Omega).
\end{align*}
Hence,
$$
{\cal F}_{\rm loc}(\chi, u;\Omega)\leq {\cal F}(\chi, u; \Omega) \leq 
\int_{\Omega}\chi(x)\, QW_1(\nabla u(x)) + (1- \chi(x))\,QW_0(\nabla u(x)) \, dx + |D \chi|(\Omega),
$$
and the proof is complete.
\end{proof}
 
The self-contained argument above was presented for the readers' convenience but we observe that if $f$ is as in \eqref{density}, and denoting by $Qf$ its quasiconvex envelope with respect to the second variable (cf. Definition \ref{qcxenv}), this proposition could have been stated and proved in two steps, namely, by showing first that 
 \begin{align*}
 	{\mathcal F}(\chi,u;\Omega)&=\inf \left\{ \liminf_{n \to +\infty}
 	\left[\int_{\Omega}Qf(\chi_n(x), 
 	\nabla u_n(x)) \, dx 
 	+ \left|D \chi_n\right|(\Omega)\right]:  
 	\right.\\ \nonumber
 	& \hspace{0,25cm}\left.	u_n\in W^{1,q}(\Omega;\mathbb R^d), \chi_n\in BV(\Omega;\{0,1\}), 
 	u_n \rightharpoonup u 
 	\hbox{ in }W^{1,p}(\Omega;\mathbb R^d), 
 	\chi_n \weakstar \chi \hbox{ in } BV(\Omega;\{0,1\})\right\},\nonumber
 	\end{align*}
 	as in Lemma \ref{BFMbendingLemma2.3} below, and then by applying Theorem \ref{main}.

\section{Dimension Reduction}

In the sequel we apply the above result to identify the optimal design of plates, in the so-called membranal regime (see e.g. \cite{ld95} and \cite{CZ1} among a wide literature), by means of dimension reduction, in the spirit of the models described in 
\cite{BFF, FF}, which also appear in the context of brutal damage evolution. 
Namely one can deduce, as a rigorous 3$D$-2$D$ $\Gamma$-limit (see \cite{DM} for a detailed treatment of the subject) as 
$\varepsilon \to 0^+$, the optimal design of an elastic membrane $\Omega(\varepsilon):= \omega\times (0,\varepsilon)$, with 
$\omega \subset \mathbb R^2$ a bounded open set with Lipschitz boundary constituted by materials with different hyperelastic responses, i.e., which truly exhibit a gap between the growth and coercivity exponents in the hyperelastic density. 

In the following we adopt the standard scaling (see \cite{CZ1} and the references quoted therein) which maps 
$x\equiv (x_1,x_2,x_3)\in \Omega(\varepsilon) \to 
(x_1, x_2,\frac{1}{\varepsilon}x_3)\in \Omega:=\omega\times (0,1)$, in order to state the problem in a fixed domain (see \eqref{FDR} below). We also denote by $\nabla_\alpha u$ and $D_\alpha \chi$, respectively, the partial derivatives of $u$ and $\chi$ with respect to $x_\alpha\equiv(x_1,x_2)$, while $\nabla_3u$ and $D_3\chi$ represent the derivatives with respect to $x_3$.

In the model under consideration, the sequence 
$\chi_\varepsilon \in BV(\Omega;\{0,1\})$ represents the design regions, whereas $u_\varepsilon \in W^{1,q}(\Omega;\mathbb R^3)$
is the sequence of deformations, which are
clamped at the lateral extremities of the membrane.
Standard arguments in dimension reduction (see e.g. \cite{ld95} and \cite{CZ1}) ensure that energy  bounded sequences (see the term in square brackets of \eqref{FDR}), converge (up to a subsequence), in the relevant topology, to fields $(\chi, u)$ such that $D_3 \chi$ and $\nabla_3u$ are null, thus they can be identified, with an abuse of notation, with fields $(\chi, u)\in BV(\omega;\{0,1\}) \times W^{1,p}(\omega;\mathbb R^3)$.
In what follows we use this notation.

In each of the following subsections we analyse the problem in two different settings, according to the topologies that are considered in the definition of the relaxed energy.

\subsection{The case of $W^{1,q}$ approximating sequences}

\begin{proposition}\label{3D2DOgden}
	Let $\omega\subset \mathbb R^2$ be a bounded, open set
		and  define 
		$\Omega:= \omega\times (0,1)$. Let $1<p \leq q < +\infty$ and
	 $f:\{0,1\} \times \mathbb R^{3 \times 3}\to \mathbb R$ be a continuous function as in \eqref{density}, with $W_i$ as in \eqref{growth2} and \eqref{growth3}
	 with $d = N = 3$.
	Assume also that for every $b \in \{0,1\}$
	\begin{equation}\label{Qffastast}
	Qf(b,\cdot)\hbox{ is convex,} 
	\end{equation}
	where $Q f(b,\cdot)$ denotes the quasiconvex envelope of $f(b,\cdot)$ (see Definition \ref{qcxenv}),  and that there exist $c, c_0 \in \mathbb R^+$ such that
	\begin{equation}\label{Ogdengrowth}
	c|\xi|^p- c_0\leq f(b,\xi), 
	\end{equation}
	for every $ b \in \{0,1\}$ and $\xi \in \mathbb R^{3\times 3}$.

Assume that $\chi$ is the characteristic function of an open, connected set of finite perimeter $E \subset \subset \omega$ such that $E$ and $\omega \setminus \overline E$ have Lipschitz boundary.
Consider a function $u_0 \in W^{1,q}(\Rb^3;\Rb^3)$ such that $u_0(x) = u_0(x_\alpha)$ so that $u_0$ may be identified with a field in $W^{1,q}(\Rb^2;\Rb^3)$. Let 
$u \in u_0 + W^{1,p}_0(\omega;\mathbb R^3)$ 
	be such that $u \in W^{1,q}(\omega \setminus \overline E;\mathbb R^3)$, and let
		\begin{align}\label{FDR}
	\displaystyle{\mathcal F}^{DR}(\chi,u)&:=\inf \left\{ \liminf_{\varepsilon \to 0^+}\left[
	\int_{\Omega}f(\chi_\varepsilon(x), \left(\nabla_\alpha u_\varepsilon(x), \tfrac{1}{\varepsilon}\nabla_3 u_\varepsilon(x))\right) \,dx 
	+ \left|\left(D_\alpha \chi_\varepsilon, \tfrac{1}{\varepsilon}D_3 \chi_{\varepsilon}\right)\right|(\Omega)\right]: \right.\\ 	\nonumber
	\\  
	& \nonumber \hspace{0,3cm}
	u_\varepsilon\in  W^{1,q}(\Omega;\mathbb R^3) {\hbox{ with }u_\eps \equiv u_0 \hbox{ on }\partial \omega \times (0,1) }, \chi_\varepsilon\in BV(\Omega;\{0,1\}), \\ \nonumber
	\\
	& \hspace{0,3cm} \nonumber
\left.	 
	u_\varepsilon \rightharpoonup u 
	\hbox{ in }W^{1,p}(\Omega;\mathbb R^3), 
	\chi_\varepsilon \weakstar \chi \hbox{ in } BV(\Omega;\{0,1\})\right\}.
	\nonumber
	\end{align}
	Then 
	\begin{equation}\label{repFDR}
	{\mathcal F}^{DR}(\chi,u)= \int_\omega Q\widehat f(\chi(x_\alpha),\nabla_\alpha u(x_\alpha)) \,dx_\alpha + |D_\alpha \chi|(\omega),
	\end{equation}
where
	\begin{equation}\label{f0}
	\widehat f(b,\xi_\alpha):=\inf_{\xi_3\in \mathbb R^{3}}f(b,\xi_\alpha,\xi_3),  
	\hbox{ with } b\in \{0,1\}, (\xi_\alpha,\xi_3) \equiv \xi \in \mathbb R^{3\times 3},
	\end{equation}
	and $Q\widehat f(b,\cdot)$ denotes the quasiconvex envelope of $\widehat{f}(b,\cdot)$ with respect to the second variable.
\end{proposition}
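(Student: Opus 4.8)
\emph{Strategy.} The plan is to prove \eqref{repFDR} by two matching inequalities: the lower bound via Ioffe's theorem after two pointwise reductions, and the upper bound by applying Corollary \ref{Qconv} in the cross-section $\omega$ and then lifting the resulting two-dimensional recovery sequence to $\Omega$ by the standard vertical corrector of dimension reduction. A preliminary ingredient needed in both steps is the commutation identity $Q\widehat{W_i}=\widehat{QW_i}$, where $\widehat{W_i}(\xi_\alpha):=\min_{\xi_3\in\Rb^3}W_i(\xi_\alpha,\xi_3)$ (the minimum being attained since \eqref{Ogdengrowth} makes $W_i(\xi_\alpha,\cdot)$ coercive) and $\widehat{QW_i}(\xi_\alpha):=\inf_{\xi_3}QW_i(\xi_\alpha,\xi_3)$: since $QW_i$ is convex by \eqref{Qffastast}, $\widehat{QW_i}$ is convex, hence quasiconvex and $\le\widehat{W_i}$, so $\widehat{QW_i}\le Q\widehat{W_i}$; conversely, as quasiconvexity is preserved under the addition of variables, $(\xi_\alpha,\xi_3)\mapsto Q\widehat{W_i}(\xi_\alpha)$ is quasiconvex on $\Rb^{3\times3}$ and $\le W_i$, hence $\le QW_i$, which yields $Q\widehat{W_i}\le\widehat{QW_i}$ after infimising over $\xi_3$. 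In particular $\widehat f(b,\cdot)=b\,\widehat{W_1}+(1-b)\,\widehat{W_0}$ satisfies \eqref{growth2} and \eqref{growth3} with exponents $p$ and $q$ (evaluate at $\xi_3=0$), and $Q\widehat f(b,\cdot)=b\,Q\widehat{W_1}+(1-b)\,Q\widehat{W_0}$ is convex for $b\in\{0,1\}$.

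\emph{Lower bound.} Given any admissible pair $(\chi_\varepsilon,u_\varepsilon)$ for \eqref{FDR}, I would use $f(\chi_\varepsilon,(\nabla_\alpha u_\varepsilon,\tfrac{1}{\varepsilon}\nabla_3 u_\varepsilon))\ge\widehat f(\chi_\varepsilon,\nabla_\alpha u_\varepsilon)\ge Q\widehat f(\chi_\varepsilon,\nabla_\alpha u_\varepsilon)$ together with $|(D_\alpha\chi_\varepsilon,\tfrac{1}{\varepsilon}D_3\chi_\varepsilon)|(\Omega)\ge|D_\alpha\chi_\varepsilon|(\Omega)$, and then invoke Ioffe's Theorem \ref{thm2.4ABFvariant} for the Borel, non-negative integrand $(b,\xi)\mapsto Q\widehat f(b,\xi_\alpha)$ on $\Rb^{3\times3}$ (convex in $\xi$ by the above, since it factors through the linear projection $\xi\mapsto\xi_\alpha$; the measurable extension in $b$ is handled exactly as in the proof of Corollary \ref{Qconv}), together with the lower semicontinuity of the total variation, using $\chi_\varepsilon\to\chi$ in $L^1(\Omega)$ and $u_\varepsilon\weak u$ in $W^{1,1}(\Omega;\Rb^3)$. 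This gives $\liminf_\varepsilon(\cdots)\ge\int_\Omega Q\widehat f(\chi,\nabla_\alpha u)\,dx+|D_\alpha\chi|(\Omega)$; since $\chi$ and $u$ are independent of $x_3$, the right-hand side equals $\int_\omega Q\widehat f(\chi,\nabla_\alpha u)\,dx_\alpha+|D_\alpha\chi|(\omega)$, and passing to the infimum over admissible sequences yields ``$\ge$'' in \eqref{repFDR}.

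\emph{Upper bound.} The plan is threefold. (i) Since $\omega$, $E$ and $\omega\setminus\overline E$ have Lipschitz boundary (hence satisfy \eqref{goodset}) and $\widehat{W_i}$ fulfil the hypotheses of Corollary \ref{Qconv} (notably $Q\widehat{W_i}=\widehat{QW_i}$ is convex), apply that corollary in $\omega\subset\Rb^2$ to obtain $\chi_n\in BV(\omega;\{0,1\})$ and $v_n\in W^{1,q}(\omega;\Rb^3)$ with $v_n\weak u$ in $W^{1,p}(\omega;\Rb^3)$, $\chi_n\weakstar\chi$, and $\liminf_n\big[\int_\omega\widehat f(\chi_n,\nabla_\alpha v_n)\,dx_\alpha+|D_\alpha\chi_n|(\omega)\big]=\int_\omega Q\widehat f(\chi,\nabla_\alpha u)\,dx_\alpha+|D_\alpha\chi|(\omega)$. (ii) By a standard slicing/cut-off near $\partial\omega$, using $v_n\to u$ in $L^p$, $u-u_0\in W^{1,p}_0(\omega;\Rb^3)$, $u_0\in W^{1,q}$ and \eqref{growth2}, \eqref{growth3}, modify $v_n$ so that $v_n\equiv u_0$ on $\partial\omega$ without affecting the limit. (iii) For each $n$ choose, by measurable selection, $\bar\xi_3^{(n)}:\omega\to\Rb^3$ realising the minimum in \eqref{f0} for $(\chi_n(x_\alpha),\nabla_\alpha v_n(x_\alpha))$, truncate it on the set where $|\nabla_\alpha v_n|$ is large so that $\bar\xi_3^{(n)}\in L^\infty(\omega;\Rb^3)$ at the price of an error vanishing in $n$, approximate it in $L^q$ by $\xi_3^{(n,k)}\in C^\infty_c(\omega;\Rb^3)$, and set $u_\varepsilon^{(n,k)}(x_\alpha,x_3):=v_n(x_\alpha)+\varepsilon\,x_3\,\xi_3^{(n,k)}(x_\alpha)$ with $\chi_\varepsilon:=\chi_n$. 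Then $u_\varepsilon^{(n,k)}\in W^{1,q}(\Omega;\Rb^3)$ equals $u_0$ on $\partial\omega\times(0,1)$, $\tfrac{1}{\varepsilon}\nabla_3 u_\varepsilon^{(n,k)}=\xi_3^{(n,k)}$, $|(D_\alpha\chi_n,\tfrac{1}{\varepsilon}D_3\chi_n)|(\Omega)=|D_\alpha\chi_n|(\omega)$, and $u_\varepsilon^{(n,k)}\to v_n$ in $W^{1,p}(\Omega;\Rb^3)$ as $\varepsilon\to0^+$; by continuity of $f$ and its $q$-growth, $\lim_{\varepsilon\to0^+}\int_\Omega f(\chi_n,(\nabla_\alpha u_\varepsilon^{(n,k)},\tfrac{1}{\varepsilon}\nabla_3 u_\varepsilon^{(n,k)}))\,dx=\int_\omega f(\chi_n,\nabla_\alpha v_n,\xi_3^{(n,k)})\,dx_\alpha\to\int_\omega\widehat f(\chi_n,\nabla_\alpha v_n)\,dx_\alpha$ as $k\to+\infty$. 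A triple diagonalisation in $\varepsilon,k,n$ then produces sequences admissible for ${\mathcal F}^{DR}(\chi,u)$ whose energy tends to at most $\int_\omega Q\widehat f(\chi,\nabla_\alpha u)\,dx_\alpha+|D_\alpha\chi|(\omega)$, which is ``$\le$'' in \eqref{repFDR}.

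\emph{Main obstacle.} I expect the delicate step to be (iii): because \eqref{Ogdengrowth} gives $W_0$ only a $p$-th order lower bound while it grows like $|\xi|^q$, the minimiser $\bar\xi_3^{(n)}$ in \eqref{f0} need not belong to $L^q$, so the corrector $\varepsilon x_3\bar\xi_3^{(n)}$ would generically fail to keep $u_\varepsilon^{(n,k)}$ in $W^{1,q}(\Omega;\Rb^3)$. Making the truncation work with a vanishing error requires exploiting the explicit mollification-plus-buffer-layer structure of the recovery sequences of Theorem \ref{main}: outside the vanishing layers $L_\varepsilon$, $F_\varepsilon$ the function $v_n$ is a mollification of $u\in W^{1,q}(\omega\setminus\overline E;\Rb^3)$, so $|\nabla_\alpha v_n|^q$ is equi-integrable there, while on the layers themselves the energy already tends to $0$. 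The commutation identity $Q\widehat{W_i}=\widehat{QW_i}$ and the boundary adjustment near $\partial\omega$ are routine but must be carried out so that Corollary \ref{Qconv} is legitimately applicable to $\widehat f$.
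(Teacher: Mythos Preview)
Your lower bound and the commutation identity $Q\widehat{W_i}=\widehat{QW_i}$ coincide with the paper's argument (the latter is exactly Lemma~\ref{QCX0Properties}). The upper bound, however, is organised quite differently in the paper, and the difference eliminates your ``main obstacle'' rather than confronting it.

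The paper's key preliminary step, which you do not use, is Lemma~\ref{BFMbendingLemma2.3}: by relaxing at fixed $\varepsilon$ (using $(Qf)_\varepsilon=Q(f_\varepsilon)$ and a standard $W^{1,q}$ relaxation result) and then diagonalising via the coercivity \eqref{Ogdengrowth}, one shows that ${\mathcal F}^{DR}$ is unchanged when $f$ is replaced by $Qf$. From then on the paper works with the convex integrand $Qf$ and does \emph{not} invoke Corollary~\ref{Qconv} as a black box. Instead it reproduces the explicit mollification--buffer-layer construction of Remarks~\ref{otherhyp}\,\textbf{2)} in $\omega$, but with a \emph{fixed} corrector $\psi\in W^{1,p}_0(\omega;\Rb^3)\cap W^{1,q}_0(\omega\setminus\overline E;\Rb^3)$, regularised in parallel with $u$, setting $v_{\varepsilon,j}(x)=w_{\varepsilon,j}(x_\alpha)+\varepsilon x_3\,\eta_{\varepsilon,j}(x_\alpha)$. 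Because in that construction $w_{\varepsilon,j}\equiv u$ and $\eta_{\varepsilon,j}\equiv\psi=0$ outside a compact subset of $\omega$, the lateral boundary condition $u_0$ is built in automatically, and since $\psi$ already sits in the right spaces there is nothing to truncate. Passing to the limit (using the $p$- and $q$-Lipschitz continuity of $QW_1$, $QW_0$) and then taking the infimum over $\psi$ yields ${\mathcal F}^{DR}(\chi,u)\le|D_\alpha\chi|(\omega)+\inf_\psi\int_\omega Qf(\chi,(\nabla_\alpha u,\psi))$; density upgrades the infimum to $\psi\in L^p(\omega)\cap L^q(\omega\setminus\overline E)$, and measurable selection for the continuous coercive $Qf$ identifies it with $\int_\omega\widehat{Qf}(\chi,\nabla_\alpha u)=\int_\omega Q\widehat f(\chi,\nabla_\alpha u)$.

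Your modular route (apply Corollary~\ref{Qconv} to $\widehat f$, then lift) is conceptually reasonable but, as you suspect, both (ii) and (iii) force you to reopen the black box. For (iii), note that equi-integrability is in fact not needed: for each fixed $n$ one has $|\nabla_\alpha v_n|^q\in L^1$, so choosing the truncation level $M_n$ with $\int_{\{|\bar\xi_3^{(n)}|>M_n\}}(1+|\nabla_\alpha v_n|^q)<1/n$ already suffices before diagonalising. Step (ii), which you call routine, is the more delicate one: Corollary~\ref{Qconv} gives no $L^q$ control of $v_n-u$ near $\partial\omega$, so the slicing/cut-off there does not go through without using the explicit form of the recovery sequence, at which point you are essentially back to the paper's direct construction.
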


We point out that the functional ${\mathcal F}^{DR}$ in \eqref{FDR} is defined in full analogy with $\mathcal F$ in \eqref{introrelaxed}, although it involves an asymptotic process which can be rigorously treated in the framework of $\Gamma$-convergence. 
On the other hand, our proof of the integral representation \eqref{repFDR} is obtained following the same strategy, based on proving a double inequality, adopted at the end of the previous section, and it is self-contained.

Before addressing the proof of Proposition~\ref{3D2DOgden} we start by proving a lemma following the ideas presented in 
\cite[Lemma 2.3]{BFMbending}. 

\begin{lemma}\label{BFMbendingLemma2.3} Under the conditions of Proposition \ref{3D2DOgden} the following holds 
	\begin{align*}
	{\mathcal F}^{DR}(\chi,u)&=\inf \left\{ \liminf_{\varepsilon \to 0^+}
	\left[\int_{\Omega}Qf(\chi_\varepsilon(x), 
	\left(\nabla_\alpha u_\varepsilon(x), \tfrac{1}{\varepsilon}\nabla_3 u_\varepsilon(x))\right) \, dx 
	+ \left|\left(D_\alpha \chi_\varepsilon,\tfrac{1}{\varepsilon} D_3 \chi_\varepsilon\right)\right|(\Omega)\right]:  
	\right.\\ \nonumber
	& \hspace{0,3cm}	u_\varepsilon\in  W^{1,q}(\Omega;\mathbb R^3) \hbox{ with }u_\eps \equiv u_0 \hbox{ on }\partial \omega \times (0,1), \chi_\varepsilon\in BV(\Omega;\{0,1\}), \\
	\nonumber
	& \hspace{0,3cm}\left. u_\varepsilon \rightharpoonup u 
	\hbox{ in }W^{1,p}(\Omega;\mathbb R^3), 
	\chi_\varepsilon \weakstar \chi \hbox{ in } BV(\Omega;\{0,1\})\right\}.\nonumber
	\end{align*}
\end{lemma}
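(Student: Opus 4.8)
The plan is to prove the asserted identity by a double inequality, the inequality ``${\mathcal F}^{DR}(\chi,u)\geq$ right-hand side'' being immediate: since $Qf(b,\cdot)\leq f(b,\cdot)$ for $b\in\{0,1\}$ and the perimeter terms are unchanged, every $(\chi_\varepsilon,u_\varepsilon)$ admissible in \eqref{FDR} has a $Qf$-energy not larger than its $f$-energy, so passing to the infimum in \eqref{FDR} gives the bound. For the reverse inequality I would start from a sequence $(\chi_\varepsilon,u_\varepsilon)$ that is near-optimal for the right-hand side, i.e.\ $u_\varepsilon\in W^{1,q}(\Omega;\Rb^3)$ with $u_\varepsilon\equiv u_0$ on $\partial\omega\times(0,1)$, $u_\varepsilon\weak u$ in $W^{1,p}(\Omega;\Rb^3)$, $\chi_\varepsilon\weakstar\chi$ in $BV(\Omega;\{0,1\})$, and such that $\liminf_{\varepsilon\to0^+}\bigl[\int_\Omega Qf(\chi_\varepsilon,(\nabla_\alpha u_\varepsilon,\tfrac1\varepsilon\nabla_3 u_\varepsilon))\,dx+|(D_\alpha\chi_\varepsilon,\tfrac1\varepsilon D_3\chi_\varepsilon)|(\Omega)\bigr]$ is arbitrarily close to the right-hand side of the lemma (if that value is $+\infty$ there is nothing to prove); we may moreover assume these energies bounded along the chosen sequence. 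The goal is then, for each fixed $\varepsilon$, to replace $u_\varepsilon$ by a sequence $v_{\varepsilon,n}$, $n\in\Nb$, admissible for ${\mathcal F}^{DR}$ with the same $\chi_\varepsilon$ and realising the $Qf$-energy of $u_\varepsilon$ in the limit, and then to diagonalise.

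The construction of $v_{\varepsilon,n}$ is the core of the argument and is where the anisotropically scaled gradient must be dealt with. For fixed $\varepsilon>0$ I would undo the rescaling through the change of variables $y=(y_\alpha,y_3)=(x_\alpha,\varepsilon x_3)$, which maps $\Omega=\omega\times(0,1)$ onto the thin domain $\Omega(\varepsilon):=\omega\times(0,\varepsilon)$; setting $\widehat u_\varepsilon(y):=u_\varepsilon(y_\alpha,y_3/\varepsilon)$ and $\widehat\chi_\varepsilon(y):=\chi_\varepsilon(y_\alpha,y_3/\varepsilon)$ one has $\nabla\widehat u_\varepsilon(y)=(\nabla_\alpha u_\varepsilon,\tfrac1\varepsilon\nabla_3 u_\varepsilon)(y_\alpha,y_3/\varepsilon)$, whence, absorbing the Jacobian,
\[
\int_\Omega f\bigl(\chi_\varepsilon(x),(\nabla_\alpha u_\varepsilon(x),\tfrac1\varepsilon\nabla_3 u_\varepsilon(x))\bigr)\,dx=\frac1\varepsilon\int_{\Omega(\varepsilon)}f\bigl(\widehat\chi_\varepsilon(y),\nabla\widehat u_\varepsilon(y)\bigr)\,dy,
\]
and likewise with $Qf$ in place of $f$. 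Now $g_\varepsilon(y,\xi):=f(\widehat\chi_\varepsilon(y),\xi)$ is a Carath\'eodory integrand on the fixed bounded Lipschitz domain $\Omega(\varepsilon)$ which, by \eqref{growth2}, \eqref{growth3} (recall $p\leq q$) and \eqref{Ogdengrowth}, satisfies $c|\xi|^p-c_0\leq g_\varepsilon(y,\xi)\leq C(1+|\xi|^q)$, and whose quasiconvexification in the second variable is $Qf(\widehat\chi_\varepsilon(y),\cdot)$. Applying a standard relaxation theorem with prescribed boundary datum (cf.\ \cite{D}, \cite[Theorem 5.4.2]{FL2}) to $g_\varepsilon$ with boundary value $\widehat u_\varepsilon$ on $\partial\Omega(\varepsilon)$ — note that $\widehat u_\varepsilon\equiv u_0$ on $\partial\omega\times(0,\varepsilon)$ because $u_0$ does not depend on $x_3$ — provides $\widehat v_{\varepsilon,n}\in\widehat u_\varepsilon+W^{1,q}_0(\Omega(\varepsilon);\Rb^3)$ with $\widehat v_{\varepsilon,n}\weak\widehat u_\varepsilon$ in $W^{1,q}(\Omega(\varepsilon);\Rb^3)$ and $\int_{\Omega(\varepsilon)}g_\varepsilon(y,\nabla\widehat v_{\varepsilon,n})\,dy\to\int_{\Omega(\varepsilon)}Qf(\widehat\chi_\varepsilon(y),\nabla\widehat u_\varepsilon(y))\,dy$. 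Undoing the change of variables, $v_{\varepsilon,n}(x):=\widehat v_{\varepsilon,n}(x_\alpha,\varepsilon x_3)$ belongs to $W^{1,q}(\Omega;\Rb^3)$, equals $u_0$ on $\partial\omega\times(0,1)$, converges weakly to $u_\varepsilon$ in $W^{1,p}(\Omega;\Rb^3)$ as $n\to+\infty$, and satisfies $\int_\Omega f(\chi_\varepsilon,(\nabla_\alpha v_{\varepsilon,n},\tfrac1\varepsilon\nabla_3 v_{\varepsilon,n}))\,dx\to\int_\Omega Qf(\chi_\varepsilon,(\nabla_\alpha u_\varepsilon,\tfrac1\varepsilon\nabla_3 u_\varepsilon))\,dx$; the perimeter term $|(D_\alpha\chi_\varepsilon,\tfrac1\varepsilon D_3\chi_\varepsilon)|(\Omega)$ is untouched because $\chi_\varepsilon$ is unchanged.

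To conclude I would pass to a diagonal sequence. Since $Qf(b,\cdot)\geq c|\cdot|^p-c_0$ — the map $\xi\mapsto c|\xi|^p-c_0$ being convex, hence quasiconvex — the bounded relaxed energies of $u_\varepsilon$, together with the fixed boundary datum and the Poincar\'e inequality, control $\|u_\varepsilon\|_{W^{1,p}(\Omega;\Rb^3)}$ uniformly in $\varepsilon$, and the same holds for $\|v_{\varepsilon,n}\|_{W^{1,p}(\Omega;\Rb^3)}$ once $n$ is large. As weak $W^{1,p}$ convergence is metrisable on bounded sets and $u_\varepsilon\weak u$, one can then select $n=n(\varepsilon)\to+\infty$ so that $v_{\varepsilon,n(\varepsilon)}\weak u$ in $W^{1,p}(\Omega;\Rb^3)$ while $\chi_\varepsilon\weakstar\chi$ in $BV(\Omega;\{0,1\})$ and the $f$-energy of $v_{\varepsilon,n(\varepsilon)}$ stays within $\varepsilon$ of the $Qf$-energy of $u_\varepsilon$; hence $(\chi_\varepsilon,v_{\varepsilon,n(\varepsilon)})$ is admissible for ${\mathcal F}^{DR}$ and yields ${\mathcal F}^{DR}(\chi,u)\leq\liminf_{\varepsilon\to0^+}\bigl[\int_\Omega Qf(\chi_\varepsilon,(\nabla_\alpha u_\varepsilon,\tfrac1\varepsilon\nabla_3 u_\varepsilon))\,dx+|(D_\alpha\chi_\varepsilon,\tfrac1\varepsilon D_3\chi_\varepsilon)|(\Omega)\bigr]$, which was chosen near-optimal, so the reverse inequality follows. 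I expect the main obstacle to be the careful bookkeeping around the anisotropically scaled gradient: verifying that the classical relaxation result, with prescribed boundary values, applies on the $\varepsilon$-dependent thin domain $\Omega(\varepsilon)$ and transfers back to $\Omega$ without disturbing either the lateral clamping $v_{\varepsilon,n}\equiv u_0$ or the required weak $W^{1,p}$ convergence — the relaxation step itself being routine once the energy has been rewritten in terms of a genuine gradient.
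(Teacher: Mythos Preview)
Your proposal is correct and follows essentially the same strategy as the paper: the easy inequality from $Qf\leq f$, then for the reverse inequality take a near-optimal sequence for the $Qf$-functional, apply a standard relaxation result (\cite[Theorem 5.4.2]{FL2}) at each fixed $\varepsilon$ to produce a recovery sequence for $f$ with the same $\chi_\varepsilon$ and the correct lateral boundary datum, and diagonalise using the coercivity \eqref{Ogdengrowth} and the metrisability of weak convergence on bounded sets. The only difference is cosmetic: the paper absorbs the anisotropic scaling algebraically via the identity $(Qf)_\varepsilon=Q(f_\varepsilon)$ (as in \cite[(2.2)]{BFMbending}) and applies relaxation to $f_\varepsilon$ on the fixed domain $\Omega$, whereas you undo the scaling geometrically by the change of variables $x\mapsto(x_\alpha,\varepsilon x_3)$ and apply relaxation to $f$ on the thin domain $\Omega(\varepsilon)$ --- these are two equivalent ways of turning the scaled gradient into a genuine gradient, and neither introduces any additional difficulty.
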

\begin{proof}
	As in \cite[(2.2)]{BFMbending}, we have that
	\begin{equation*}\label{2.2BFM}
	(Qf)_\varepsilon(b,\xi)= Q(f_\varepsilon)(b,\xi), \; \forall b \in \{0,1\},
	\forall \xi \in \mathbb R^{3\times 3},
	\end{equation*} 
	\noindent where the quasiconvex envelopes are taken with respect to the $\xi$ variable and for any function \hfill \linebreak
	$g:\{0,1\}\times \mathbb R^{3\times 3}\to [0,+\infty)$, 
	$$ g_\varepsilon(b,\xi_\alpha,\xi_3):=g\left(b,\xi_\alpha, \tfrac{1}{\varepsilon}\xi_3\right).$$
	In light of \eqref{Qffastast}, $(Qf)_{\varepsilon}$ is convex in the variable 
	$\xi =(\xi_\alpha,\xi_3)$. Similarly, we use the notation $|D_\eps\chi_{\eps}|(\Omega)$ to represent $\left|\left(D_\alpha \chi_\varepsilon,\tfrac{1}{\varepsilon} D_3 \chi_\varepsilon\right)\right|(\Omega) $.
	
	Let ${\mathcal F}^{DR}_{Qf}(\chi,u)$ be defined as ${\mathcal F}^{DR}(\chi,u)$ but replacing $f$ by $Qf$.
	Clearly, since $Qf\leq f$, it follows that ${\mathcal F}^{DR}_{Qf}\leq {\mathcal F}^{DR}$
	so we only need to prove the opposite inequality. 
	To this end, for every $\delta>0$ and every $(\chi, u)$ satisfying the hypotheses of Proposition \ref{3D2DOgden},
    let
	$(\chi_\varepsilon, u_\varepsilon)\in BV(\Omega;\{0,1\})\times  W^{1,q}(\Omega;\mathbb R^3)$ be such that $u_\eps \equiv u_0$ on $\partial \omega \times (0,1)$,
	$u_\varepsilon \weak u$ in $W^{1,p}(\Omega;\mathbb R^3)$, $\chi_\varepsilon \weakstar \chi$ in $BV(\Omega;\{0,1\})$
	and
	$$
	{\mathcal F}^{DR}_{Qf}(\chi,u)\geq \liminf_{\varepsilon \to 0^+}
		\left[\int_{\Omega}Q(f_\eps)
	\left(\chi_\varepsilon(x),\nabla u_\varepsilon(x)\right) \, dx 
	+\left|D_\eps \chi_\varepsilon \right|(\Omega)\right]-\delta.
	$$
Up to the extraction of a subsequence, we may assume that the above lim inf is, in fact, a limit.	

	An application of \cite[Theorem 5.4.2]{FL2}, with a similar argument used to conclude \eqref{relax}, and reasoning as in \cite[Lemma 12]{BFLM} and \cite[Corollary 1.3]{BFMbending}, shows that there exists
	$ u_{\varepsilon, k}\in  W^{1,q}(\Omega;\mathbb R^3)$ such that $ u_{\eps,k} \equiv u_0$ on $\partial \omega \times (0,1)$,
	$
	u_{\varepsilon, k} \rightharpoonup u_{\varepsilon} $ weakly in $W^{1,q}$, as $k \to + \infty$,
	and
	\begin{align}\label{recQf}
	\int_{\Omega}Q(f_\eps)\left(\chi_\varepsilon(x),\nabla u_\varepsilon(x)\right) \, dx +\left|D_\eps \chi_\varepsilon\right|(\Omega)
	= \lim_{k\to +\infty}\int_{\Omega}f_\eps\left(\chi_\varepsilon(x), \nabla u_{\varepsilon,k}(x)\right) \, dx +\left|D_\eps \chi_\varepsilon\right|(\Omega).
	\end{align}
	Thus we can say that
	\begin{equation}\label{doubleenergy}
	{\mathcal F}^{DR}_{Qf}(\chi, u)\geq \lim_{\varepsilon \to 0^+}\lim_{k\to +\infty} \left [
	\int_{\Omega}f_\eps\left(\chi_\varepsilon(x), \nabla u_{\varepsilon,k}(x)\right) \, dx +\left|D_\eps \chi_\varepsilon\right|(\Omega)\right] -\delta, 
	\end{equation}		
	and
	$$\lim_{\varepsilon \to 0^+}\lim_{k\to +\infty}\|u_{\varepsilon, k}-u\|_{L^p(\Omega;\Rb^3)}=0.$$
	The growth from below in \eqref{Ogdengrowth}, the convexity of $|\cdot|^p$ and the fact that
	the weak topology is metrisable on bounded sets, ensure that there exist a diagonal sequence 
	$u_{\varepsilon_k,k}$  and a subsequence $\chi_{\varepsilon_k}$ such that
	$$
	(\chi_{\varepsilon_k},u_{\varepsilon_k,k}) \rightharpoonup (\chi,u) \hbox{ in } BV \hbox{-weak}^\ast \times W^{1,p} \hbox{-weak}, \hbox{ as }
	k \to + \infty,
	$$ 
	the double limit in \eqref{doubleenergy} exists, and thus
	$$
	{\mathcal F}^{DR}_{Qf}(\chi, u) \geq \lim_{k\to +\infty}\left[\int_{\Omega}f_{\eps_k}\left(\chi_{\varepsilon_k}(x), 
	\nabla u_{\varepsilon_k,k}(x)\right) \, dx +
	\left|D_{\varepsilon_k} \chi_{\varepsilon_k}\right|(\Omega)\right] -\delta,
	$$
	which, in turn, implies that
	\begin{equation}\label{calFdelta}
	{\mathcal F}^{DR}_{Qf}(\chi, u) \geq {\mathcal F}^{DR}(\chi, u) -\delta.
\end{equation}
	It suffices to let $\delta \to 0^+$ to conclude the proof.
\end{proof}

\begin{lemma}
	\label{QCX0Properties}
	Assume that $f$ is as in Proposition \ref{3D2DOgden}, and $Qf$, its quasiconvex envelope with respect to the second variable, satisfies \eqref{Qffastast}
	Then, for every $b \in \{0,1\}$,
	\begin{equation}
	\label{Qf0=}
	\widehat {Qf}(b,\cdot)=Q \widehat f(b,\cdot),
	\end{equation} 
		where, for each function $g:\{0,1\}\times\mathbb R^{3 \times 3} \to [0,+\infty)$, $\widehat g:\{0,1\}\times\mathbb R^{3 \times 2}\to [0,+\infty)$ is defined as in \eqref{f0}.
\end{lemma}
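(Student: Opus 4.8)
The goal is the commutation identity $\widehat{Qf}(b,\cdot) = Q\widehat f(b,\cdot)$, where the hat denotes the inf over the $\xi_3$-component and $Q$ is the quasiconvex envelope in the matrix variable. Since $b$ plays no active role here (everything is done at fixed $b \in \{0,1\}$), I will simply write $g(\cdot)$ for $f(b,\cdot)$ and prove $\widehat{Qg} = Q\widehat g$ for a continuous $g:\Rb^{3\times 3}\to[0,+\infty)$ satisfying the $p$-coercivity \eqref{Ogdengrowth} and $q$-growth \eqref{growth3}, under the standing assumption that $Qg$ is convex. The plan is to prove the two inequalities separately.

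\textbf{Step 1: $\widehat{Qg} \leq Q\widehat g$.} Since $Q\widehat g \leq \widehat g$ and, for each fixed $\xi_\alpha$, $\widehat g(\xi_\alpha) = \inf_{\xi_3} g(\xi_\alpha,\xi_3) \leq g(\xi_\alpha,\xi_3)$, we get $Q\widehat g(\xi_\alpha) \leq g(\xi_\alpha,\xi_3)$ for every $\xi_3$. The key point is that $\xi_\alpha \mapsto Q\widehat g(\xi_\alpha)$, viewed as a function of the full matrix $\xi = (\xi_\alpha,\xi_3)$ that is independent of $\xi_3$, is quasiconvex on $\Rb^{3\times 3}$ (a quasiconvex function of a linear — here, projection — image of the gradient variable is quasiconvex; this is standard, e.g. one tests \eqref{Mqcx} with variations and uses that the $\xi_3$-columns of $\nabla\varphi$ average to zero). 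Hence this function is a quasiconvex lower bound for $g$, so it is $\leq Qg$, i.e. $Q\widehat g(\xi_\alpha) \leq Qg(\xi_\alpha,\xi_3)$ for all $\xi_3$; taking the infimum over $\xi_3$ gives $Q\widehat g(\xi_\alpha) \leq \widehat{Qg}(\xi_\alpha)$.

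\textbf{Step 2: $\widehat{Qg} \geq Q\widehat g$.} Here is where the convexity hypothesis \eqref{Qffastast} is essential. Since $Qg$ is convex, $\widehat{Qg}(\xi_\alpha) = \inf_{\xi_3} Qg(\xi_\alpha,\xi_3)$ is a partial infimum of a convex function, hence itself convex in $\xi_\alpha$; a convex function is quasiconvex, so $\widehat{Qg}$ is quasiconvex on $\Rb^{3\times 2}$. Moreover $Qg \leq g$ gives $\widehat{Qg} \leq \widehat g$. Therefore $\widehat{Qg}$ is a quasiconvex function below $\widehat g$, and by the maximality in Definition \ref{qcxenv} it follows that $\widehat{Qg} \leq Q\widehat g$. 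Combining Steps 1 and 2 yields the claimed equality, valid for each $b \in \{0,1\}$.

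\textbf{Main obstacle and technical care.} The routine-looking points that genuinely need attention are: (i) justifying that a quasiconvex function composed with the projection $\xi \mapsto \xi_\alpha$ is again quasiconvex (used in Step 1), which requires checking Morrey's inequality \eqref{Mqcx} on suitable test domains and exploiting that the last-column part of $\nabla\varphi$ has zero mean; and (ii) ensuring all quantities are finite and the envelopes are well-defined — this is where \eqref{Ogdengrowth} and \eqref{growth3} enter, guaranteeing $\widehat g$ and $\widehat{Qg}$ are real-valued with $p$-growth from below and $q$-growth from above, so that Definition \ref{Morrey-qcx} (which excludes the value $+\infty$) applies. The conceptual heart, however, is simply the observation that under \eqref{Qffastast} the partial infimum preserves convexity, which is what makes the otherwise-delicate commutation of $Q$ with the dimension-reduction infimum go through without the usual obstructions.
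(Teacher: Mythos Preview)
Your approach is essentially identical to the paper's: both inequalities are obtained by the same mechanism---one uses that $Q\widehat g$, extended trivially in $\xi_3$, is quasiconvex on $\Rb^{3\times 3}$ (the paper cites \cite[Proposition 6]{ld95} for this), and the other uses that the convexity hypothesis \eqref{Qffastast} makes the partial infimum $\widehat{Qg}$ convex, hence quasiconvex, in $\xi_\alpha$. One presentational slip: the headings of your Steps 1 and 2 are swapped relative to what each step actually proves (Step 1 concludes $Q\widehat g \leq \widehat{Qg}$, Step 2 concludes $\widehat{Qg} \leq Q\widehat g$), so you should exchange the two displayed inequalities in the boldface labels.
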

\begin{proof}
Rewriting, as in \eqref{f0}, $\xi$ as $(\xi_\alpha, \xi_3) \in \mathbb R^{3 \times 3}$, we observe that $\widehat f(b, \xi_\alpha)\leq f(b,\xi_\alpha, \xi_3)$ for every $(b,\xi_\alpha, \xi_3) \in \{0,1\}\times \mathbb R^{3 \times 3}$, thus 
	\begin{equation}\label{Qbarf}
	Q \widehat f(b,\xi_\alpha) \leq Q f(b,\xi_\alpha,\xi_3)
	\end{equation} for every $(b,\xi_\alpha, \xi_3) \in \{0,1\}\times \mathbb R^{3 \times 3}$, where, with an abuse of notation, $\widehat f$ and $Q\widehat f(b,\cdot)$ are considered as defined in $\{0,1\}\times \mathbb R^{3\times 3}$, assuming that they are independent of $\xi_3$, the quasiconvex envelope on the right hand side of \eqref{Qbarf} is taken with respect to the variable $(\xi_\alpha, \xi_3)\in \mathbb R^{3\times 3}$, and we are taking into account, as in \cite[Proposition 6]{ld95}, that $Q\widehat f(b,\cdot)$ is quasiconvex as a function of $(\xi_\alpha, \xi_3)$.
	Then, applying \eqref{f0} to both sides of \eqref{Qbarf} we have 
	$$
	Q \widehat f(b,\xi_\alpha)=\widehat{Q \widehat f}(b,\xi_\alpha) \leq \widehat{Q f}(b,\xi_\alpha),
	$$
	for every $(b,\xi_\alpha) \in \{0,1\}\times \mathbb R^{3 \times 2}$, which proves one inequality.
	
	For what concerns the reverse inequality, since $Qf(b,\xi) \leq f(b,\xi)$ for every $(b,\xi)\in \{0,1\} \times \mathbb R^{3\times 3}$, we have
	$$\widehat{Qf}(b,\xi_\alpha) \leq \widehat f(b,\xi_\alpha),$$
	for every $(b,\xi_\alpha)\in \{0,1\} \times \mathbb R^{3\times 2}$. On the other hand, it is easily seen (cf. also \cite[(5.10)]{BZ}) that \eqref{Qffastast} entails the convexity of $\widehat{Qf}$ with respect to the variable $\xi_\alpha$, thus  $\widehat{Qf} $ is quasiconvex with respect to $\xi_\alpha$, hence $$\widehat{Qf}(b,\xi_\alpha)=Q(\widehat{Qf})(b,\xi_\alpha)\leq Q\widehat f(b,\xi_\alpha)$$ for every $(b,\xi_\alpha) \in \{0,1\}\times \mathbb R^{3\times 2}$ which concludes the proof.
	
\end{proof}

\begin{proof}
	[Proof of Proposition \ref{3D2DOgden}]
	The proof of \eqref{repFDR} is obtained by showing a double inequality.
	We use Lemma \ref{BFMbendingLemma2.3} and replace $f$ by $Qf$. 
	We also point out that the hypotheses placed on $E$ and $\omega \setminus \overline E$ imply, in particular, that $E$ and $\omega \setminus \overline E$ satisfy \eqref{goodset} with $N=2$. 
	
	For what concerns the lower bound, we first observe that
	by \eqref{Ogdengrowth} we have
	\begin{equation}
	\nonumber
	c|\xi_\alpha|^p- c_0 \leq Q\widehat f(b,\xi_\alpha),
	\end{equation}
	for every $(b,\xi_\alpha) \in \{0,1\}\times \mathbb R^{3 \times 2}$, and 
	$Q\widehat{W_0}$ and $Q\widehat{W_1}$ satisfy \eqref{growth2} and \eqref{growth3}, respectively.	On the other hand, we recall that, 
	since $b$ takes only the values 0 and 1 and the quasiconvex envelope is taken with respect to the variable $\xi_\alpha$,
	\begin{equation}\label{QfQWi}Q\widehat f (b,\xi_\alpha)= bQ\widehat{ W_1}(\xi_\alpha)+(1-b)Q\widehat{W_0}(\xi_\alpha),
	\end{equation} 
	for every $(b,\xi_\alpha)\in\{0,1\}\times \mathbb R^{3 \times 2}$.
	
\noindent	Moreover the functional 
	$$\int_\Omega Q\widehat f(\chi(x_\alpha, x_3),\nabla_\alpha u (x_\alpha, x_3)) \, dx$$ 
	is lower semicontinuous with respect to $BV$-weak $\ast \times W^{1,p}$ -weak convergence by Theorem \ref{thm2.4ABFvariant}. Indeed by Lemma \ref{QCX0Properties}, $Q\widehat f(b,\xi_\alpha)=\widehat {Qf}(b,\xi_\alpha)$, thus, by \eqref{Qffastast} and \eqref{f0}, it is convex in the second variable. Then, the superadditivity of the limit inf, the fact that $\left|\left(D_\alpha \chi_{\varepsilon}, \tfrac{1}{\varepsilon}D_3 \chi_\varepsilon\right)\right|(\Omega) \geq |D_\alpha \chi_\varepsilon|(\Omega)$ and the lower semicontinuity of the total variation, entail that for any admissible pair $(\chi_\eps, u_\eps)$
\begin{align*}
&\int_\omega Q{\widehat f}(\chi(x_\alpha),\nabla_\alpha u(x_\alpha))\,dx_\alpha + |D_\alpha \chi|(\omega) \leq 
	\liminf_{\varepsilon \to 0^+} \left[\int_{\Omega}Q {\widehat f}(\chi_\varepsilon(x_\alpha, x_3),\nabla_\alpha u_\varepsilon (x_\alpha, x_3))\,dx +|D_\alpha \chi_\varepsilon|(\Omega)\right] \\
	& \hspace{4cm} \leq 	\liminf_{\varepsilon \to 0^+} \left[\int_{\Omega}
	Qf\left(\chi_\varepsilon(x),(\nabla_\alpha u_\varepsilon (x), \tfrac{1}{\eps}\nabla_3 u_\eps(x))\right)\,dx +\left|\left(D_\alpha \chi_\varepsilon,\tfrac{1}{\eps}D_3\chi_{\eps}\right)\right|(\Omega)\right]
\end{align*}
	so that 
	$${\mathcal F}^{DR}(\chi, u) \geq \int_\omega Q{\widehat f}(\chi(x_\alpha),\nabla_\alpha u(x_\alpha))\,dx_\alpha + |D_\alpha \chi|(\omega).$$

In order to prove the upper bound,
We use a two-dimensional version of the proof and the notations of Theorem \ref{main} and Remarks \ref{otherhyp} {\bf 2)}. 
	
Let $\chi \in BV(\omega;\{0,1\})$ be the characteristic function of an open, connected set of finite perimeter $E \subset \subset \omega$ such that \eqref{goodset} holds for $E$ and $\omega \setminus \overline E$,
i.e. $\mathcal H^1(\partial E)= P(E;\omega)= P(E;\mathbb R^2)$, 
$\mathcal H^1(\partial(\omega \setminus \overline E))= P(\omega \setminus \overline E;\mathbb R^2)$,
and let 
$u \in u_0 + W^{1,p}_0(\omega;\mathbb R^3)\cap W^{1,q}(\omega\setminus \overline E;\mathbb R^3)$. 
Consider $\chi_{\eps}(x_\alpha)$, the characteristic function of the set $E \cup F_\eps$, 
and let $w_{\eps,j}(x_\alpha)$ be defined as 
\begin{equation}\label{wdimred}
w_{\eps,j}(x_\alpha):= \varphi_{\eps,j}(x_\alpha) u_{\eps,j}(x_\alpha) +(1-\varphi_{\eps,j}(x_\alpha)){u}(x_\alpha),
\end{equation} 
where $\varphi_{\eps,j}$ is the two-dimensional version of the sequence of cut-off functions considered in the proof of Theorem \ref{main}. 
Given $\psi \in  W^{1,p}_0(\omega;\mathbb R^3)\cap W^{1,q}_0(\omega \setminus \overline E;\mathbb R^3)$, we regularise $\psi$ 
in the same way as in Remarks \ref{otherhyp} {\bf 2)}, that is, given the usual sequence of standard mollifiers
$\{\rho_j\}_{j \in \Nb}$, we consider
$\psi_{\eps,j}(x_\alpha) = ((\psi \cdot \chi_{E \cup F_\eps \cup L_\eps}) 
\star \rho_j)(x_\alpha)$ 
and, in a similar fashion to \eqref{wdimred}, we let 
\begin{equation}\label{eta}
\eta_{\eps,j}(x_\alpha):= \varphi_{\eps,j}(x_\alpha) \psi_{\eps,j}(x_\alpha) +(1-\varphi_{\eps,j}(x_\alpha)) \psi(x_\alpha),
\; \; x_\alpha\in \omega.
\end{equation}
We now define
$$v_{\varepsilon,j}(x):= w_{\eps,j}(x_\alpha)+ \varepsilon \, x_3\,\eta_{\eps,j}(x_\alpha),$$
and, by abuse of notation, consider 
$\chi_\eps(x) = \chi_\eps(x_\alpha)$.
Clearly $\{v_{\varepsilon,j}\}$ and $\{\chi_\eps\}$ are admissible for 
${\mathcal F}^{DR}(\chi, u)$ so we obtain, using Lemma \ref{BFMbendingLemma2.3},
\begin{align*}
{\mathcal F}^{DR}(\chi, u) &\leq \liminf_{\eps \to 0^+}\liminf_{j\to+\infty}
\left[\int_{\Omega}Qf\big(\chi_\eps(x),(\nabla_\alpha v_{\varepsilon,j}(x),\tfrac{1}{\eps}\nabla_3v_{\varepsilon,j}(x))\big)\, dx +
|(D_\alpha\chi_\eps,\tfrac{1}{\eps}D_3\chi_\eps)|(\Omega)\right]\\
& = \liminf_{\eps \to 0^+}\liminf_{j\to+\infty}
\left[\int_{\Omega}Qf\big(\chi_\eps(x),(\nabla_\alpha w_{\varepsilon,j}(x_\alpha)+\eps x_3\nabla_\alpha \eta_{\varepsilon,j}(x_\alpha),\eta_{\varepsilon,j}(x_\alpha))\big)\, dx +
|D_\alpha\chi_\eps|(\Omega)\right]\\
&\leq  \int_{\Omega}Qf\big(\chi(x),(\nabla_\alpha u(x_\alpha),\psi(x_\alpha))\big)\, dx + |D\chi|(\Omega)\\
&= \int_{\omega}Qf\big(\chi(x_\alpha),(\nabla_\alpha u(x_\alpha),\psi(x_\alpha))\big)\, dx_\alpha + |D_\alpha\chi|(\omega),
\end{align*}
where the inequality on the third line is proved following the estimates provided in the proof of Theorem \ref{main} and in Remarks \ref{otherhyp}, and also using the $q$-Lipschitz continuity of $W_0$ and the $p$-Lipschitz continuity of $W_1$. Hence, given the arbitrariness of $\psi$, we conclude that
$${\mathcal F}^{DR}(\chi, u)\leq |D_\alpha \chi|(\omega)+\inf_{\psi \in W^{1,p}_0(\omega;\mathbb R^3)\cap W^{1,q}_0(\omega\setminus \overline E;\mathbb R^3)} \int_\omega Qf\big(\chi(x_\alpha),(\nabla_\alpha u(x_\alpha), \psi(x_\alpha))\big)\,dx_\alpha.$$
On the other hand, the growth conditions \eqref{growth2}, \eqref{growth3} and a density argument show that
\begin{align*}
&\inf_{\psi \in W^{1,p}_0(\omega;\mathbb R^3)\cap W^{1,q}_0(\omega\setminus \overline E;\mathbb R^3)} \int_\omega Qf\big(\chi(x_\alpha),(\nabla_\alpha u(x_\alpha), \psi(x_\alpha))\big)\,dx_\alpha \\
& \hspace{4cm}=
\inf_{\psi \in L^p(\omega;\mathbb R^3)\cap L^q(\omega\setminus \overline E;\mathbb R^3)} \int_\omega Qf\big(\chi(x_\alpha),(\nabla_\alpha u(x_\alpha), \psi(x_\alpha))\big)\,dx_\alpha.
\end{align*}

	Recalling the continuity and the coercivity of $Q f(b,\cdot)$, as in \eqref{Ogdengrowth}, and using Lemma \ref{QCX0Properties}, \eqref{QfQWi},
	and the measurability criterion which provides the existence of $\bar\psi \in L^p(\omega;\mathbb R^3)\cap L^q(\omega \setminus \overline E;\mathbb R^3)$ such that
	$$
	Q\widehat f(\chi(x_\alpha),\nabla_\alpha u(x_\alpha))=\widehat{Qf}(\chi(x_\alpha),\nabla_\alpha u(x_\alpha))=Qf\big(\chi(x_\alpha),
	(\nabla_\alpha u(x_\alpha),\bar{\psi}(x_\alpha))\big),
	$$
	it follows that 
	$$
	{\mathcal F}^{DR}(\chi, u)\leq  |D_\alpha \chi|(\omega)+ \int_\omega Q\widehat f(\chi(x_\alpha),\nabla_\alpha u(x_\alpha)) \,dx_\alpha,
	$$
	which completes the proof. 
\end{proof}

In order to deal with optimal design problems where the volume fraction of each phase is prescribed, i.e. as in \eqref{vol}, 
it is easily seen that the constraint $\displaystyle \frac{1}{\LL^N(\Omega)}\int_{\Omega}\chi(x) \, dx = \theta, 
\theta \in (0,1)$, does not affect at all our proof, if we insert it in the form of a Lagrange multiplier into the model, that is, we can add 
$\displaystyle \kappa \int_{\Omega}\chi(x) \, dx, \, \kappa >0$, to the functional $F$ since this is a linear term. 

On the other  hand this choice allows us to interpret the representation result in Proposition \ref{3D2DOgden}, in the light of ``brutal damage evolution models for thin films'' as proposed in \eqref{Fdam}, where, in fact, the linear term describes a dissipation energy. 

Another possibility to deal with the volume constraint is to argue as in Remark \ref{volfrac}.

\subsection{The case of $W^{1,p(x)}$ approximating sequences}

In the sequel we present a dimension reduction result in the framework of Sobolev spaces with piecewise constant exponents (cf. \cite{CM} and \cite{CF, DHHR} for more details on variable Lebesgue spaces).
We recall that for every bounded function $p:\Omega \to [1,+\infty)$, the Lebesgue and Sobolev spaces with variable exponents are defined as
\begin{align}\nonumber
L^{p(x)} (A;\mathbb R^N) &:=\left\{u: A \to \mathbb R^N:
\int_A|u|^{p(x)} dx <+\infty\right\},\\
W^{1,p(x)} (A;\mathbb R^N)&:=\left\{
u \in L^{p(x)} (A;\mathbb R^N) :\nabla u \in L^{p(x)}(A;\mathbb R^{n\times N})\right\},\nonumber \\
W^{1,p(x)}_0 (A;\mathbb R^N)&:=\left\{
u \in W^{1,1}_0 (A;\mathbb R^N) :\nabla u \in L^{p(x)}(A;\mathbb R^{n\times N})\right\},
\label{LpxW1px}
\end{align}
for every open subset $A$ of $\Omega$.

\begin{proposition}\label{3D2DOgdenCM}
	Let $\omega\subset \mathbb R^2$ be a bounded, open set with Lipschitz boundary
	and  define 
	$\Omega:= \omega\times (0,1)$. Let $1<p \leq q < +\infty$ and
	$f:\{0,1\} \times \mathbb R^{3 \times 3}\to \mathbb R$ be a continuous function as in \eqref{density}, with $W_i$ such that \begin{align}\label{q} 
	&\frac{1}{C}|\xi|^q \leq W_0(\xi)\leq C(|\xi|^q+1),\\
	&\frac{1}{C'}|\xi|^p \leq W_1(\xi)\leq C'(|\xi|^p+1)  \; \hbox{ for every }\xi \in \mathbb R^{3\times 3}. \label{p}
	\end{align}
	 for suitable positive constants $C$ and $C'$.
	 Assume also that \eqref{Qffastast} holds, for every $b \in \{0,1\}$.

Let $\chi\in BV(\omega;\{0,1\})$ be the characteristic function of an open, connected set with Lipschitz boundary $E\subset \subset \omega$ such that $\mathcal L^2(\partial E)=0$, $\omega \setminus \overline E$ has Lipschitz boundary and, with an abuse of notation, assume that $\chi\in BV(\Omega;\{0,1\})$ by setting $\chi(x) = \chi(x_\alpha)$. Let 
$u_0 \in W^{1,q}(\Rb^3;\Rb^3)$ be such that $u_0(x) = u_0(x_\alpha)$ so that $u_0$ may be identified with a field in $W^{1,q}(\Rb^2;\Rb^3)$, and assume that	
$u \in u_0 + W^{1,\chi(x)p+ (1-\chi(x))q}_0(\omega;\mathbb R^3)$, where 
this space is to be understood in the sense of \eqref{LpxW1px}. 

Denote by $\mathcal A$ the subset of $BV(\Omega;\{0,1\})\times W^{1,p}(\Omega;\mathbb R^3)$ composed of sequences $\{(\chi_\varepsilon, u_\varepsilon)\}$ converging strongly in $L^1(\Omega;\{0,1\})\times L^1(\Omega;\mathbb R^3)$ to $(\chi,u)$ and such that 
$$
\liminf_{\varepsilon \to 0^+}\left[\int_{\Omega}
f_\eps(\chi_\varepsilon(x), \nabla u_\varepsilon(x)) \,dx 
+ \left|D_\eps \chi_\varepsilon \right|(\Omega)\right]<+\infty.$$
	Let
	\begin{align}\label{FDRCM}
	\overline {\mathcal F}(\chi,u):=\inf \left\{ \liminf_{\varepsilon \to 0^+}
	\left[\int_{\Omega}
	f_\eps(\chi_\varepsilon(x), \nabla u_\varepsilon(x)) \,dx 
	+ \left|D_\eps \chi_\varepsilon \right|(\Omega)\right]: 
	(\chi_\varepsilon, u_\varepsilon)\in {\mathcal A},	
	u_\eps \equiv u_0 \hbox{ on } \partial \omega \times (0,1)
	\right\}.
	\end{align}
	Then 
	\begin{equation}\label{repFCM}
	\overline{\mathcal F}(\chi,u) = \int_\omega Q\widehat f(\chi(x_\alpha),\nabla_\alpha u(x_\alpha)) \,dx_\alpha + 
	|D_\alpha \chi|(\omega),
	\end{equation}
	where $	\widehat f(b,\xi_\alpha)$ is given by \eqref{f0},
	and, as above, $Q\widehat f(b,\cdot)$ denotes the quasiconvex envelope of $\widehat{f}(b,\cdot)$ with respect to the second variable.
\end{proposition}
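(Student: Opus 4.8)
The plan is to establish \eqref{repFCM} by a double inequality, reducing the upper bound to Proposition \ref{3D2DOgden} and proving the lower bound directly via Ioffe's Theorem \ref{thm2.4ABFvariant}. First I would record that the present hypotheses are compatible with those of Proposition \ref{3D2DOgden}: since $E$ has Lipschitz boundary, $\LL^2(\partial E)=0$ is automatic and \eqref{goodset} holds both for $E$ and for $\omega\setminus\overline E$ with $N=2$; and, unravelling \eqref{LpxW1px}, the condition $u\in u_0+W^{1,\chi(x)p+(1-\chi(x))q}_0(\omega;\mathbb R^3)$ means exactly that $u-u_0\in W^{1,1}_0(\omega;\mathbb R^3)$ with $\nabla(u-u_0)\in L^p(E;\mathbb R^{3\times 2})\cap L^q(\omega\setminus\overline E;\mathbb R^{3\times 2})$, so (as $u_0\in W^{1,q}(\mathbb R^2;\mathbb R^3)$ and $p\leq q$) $u\in u_0+W^{1,p}_0(\omega;\mathbb R^3)$ and $u\in W^{1,q}(\omega\setminus\overline E;\mathbb R^3)$. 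Moreover \eqref{q} and \eqref{p} imply \eqref{growth2}, \eqref{growth3} and \eqref{Ogdengrowth}, and the right-hand side of \eqref{repFCM} is finite because $Q\widehat{W_1}$ has $p$-growth on $E$ while $Q\widehat{W_0}$ has $q$-growth on $\omega\setminus\overline E$, matching the integrability of $\nabla_\alpha u$ on the two regions.

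For the upper bound I would observe that $\mathcal F^{DR}$ and $\overline{\mathcal F}$ are built from the very same energy functional and differ only in their admissible classes, and that $\mathcal A$ contains every competitor $(\chi_\varepsilon,u_\varepsilon)$ for $\mathcal F^{DR}(\chi,u)$ of finite $\liminf$-energy. Indeed, for such a sequence the perimeter part of the energy bounds $\chi_\varepsilon$ in $BV(\Omega;\{0,1\})$, the coercivity in \eqref{p}, \eqref{q} bounds $u_\varepsilon$ in $W^{1,p}(\Omega;\mathbb R^3)$, and from $u_\varepsilon\rightharpoonup u$ in $W^{1,p}$ and $\chi_\varepsilon\weakstar\chi$ in $BV$ the compact embeddings of $W^{1,p}(\Omega)$ and of $BV(\Omega)$ into $L^1(\Omega)$ force strong $L^1\times L^1$ convergence to $(\chi,u)$, so that $(\chi_\varepsilon,u_\varepsilon)\in\mathcal A$ (with $u_\varepsilon\equiv u_0$ on $\partial\omega\times(0,1)$ inherited). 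Taking the infimum over these competitors then yields $\overline{\mathcal F}(\chi,u)\leq\mathcal F^{DR}(\chi,u)=\int_\omega Q\widehat f(\chi(x_\alpha),\nabla_\alpha u(x_\alpha))\,dx_\alpha+|D_\alpha\chi|(\omega)$, the last equality being Proposition \ref{3D2DOgden}.

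For the lower bound, let $(\chi_\varepsilon,u_\varepsilon)\in\mathcal A$ with $u_\varepsilon\equiv u_0$ on $\partial\omega\times(0,1)$; after passing to a subsequence I may assume the energy converges to a finite limit. The coercivity in \eqref{p}, \eqref{q} and $p\leq q$ bound $\int_\Omega\big(|\nabla_\alpha u_\varepsilon|^p+|\tfrac{1}{\varepsilon}\nabla_3 u_\varepsilon|^p\big)\,dx$ uniformly, whence (together with $u_\varepsilon\to u$ in $L^1$) $u_\varepsilon\rightharpoonup u$ in $W^{1,p}(\Omega;\mathbb R^3)$ and $\nabla_3 u_\varepsilon\to0$ in $L^p$. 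Using the commutation $Q(f_\varepsilon)=(Qf)_\varepsilon$ recalled in the proof of Lemma \ref{BFMbendingLemma2.3}, the inequality $f_\varepsilon\geq Q(f_\varepsilon)$, the definition \eqref{f0}, and Lemma \ref{QCX0Properties}, one has pointwise
\[
f_\varepsilon\big(\chi_\varepsilon(x),\nabla u_\varepsilon(x)\big)\ \geq\ Qf\big(\chi_\varepsilon(x),\nabla_\alpha u_\varepsilon(x),\tfrac{1}{\varepsilon}\nabla_3 u_\varepsilon(x)\big)\ \geq\ \widehat{Qf}\big(\chi_\varepsilon(x),\nabla_\alpha u_\varepsilon(x)\big)\ =\ Q\widehat f\big(\chi_\varepsilon(x),\nabla_\alpha u_\varepsilon(x)\big).
\]
Since $Q\widehat f(b,\cdot)$ is convex by Lemma \ref{QCX0Properties} and \eqref{Qffastast}, viewing it as independent of $\xi_3$ and applying Theorem \ref{thm2.4ABFvariant} gives $\liminf_{\varepsilon\to0^+}\int_\Omega Q\widehat f(\chi_\varepsilon,\nabla_\alpha u_\varepsilon)\,dx\geq\int_\Omega Q\widehat f(\chi,\nabla_\alpha u)\,dx=\int_\omega Q\widehat f(\chi(x_\alpha),\nabla_\alpha u(x_\alpha))\,dx_\alpha$ by Fubini. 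Combining this with $|D_\varepsilon\chi_\varepsilon|(\Omega)\geq|D_\alpha\chi_\varepsilon|(\Omega)$, the lower semicontinuity of the total variation under $L^1$ convergence, and the superadditivity of the $\liminf$, and finally taking the infimum over $\mathcal A$, one obtains $\overline{\mathcal F}(\chi,u)\geq\int_\omega Q\widehat f(\chi(x_\alpha),\nabla_\alpha u(x_\alpha))\,dx_\alpha+|D_\alpha\chi|(\omega)$, which together with the upper bound proves \eqref{repFCM}.

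The genuinely delicate point is the interplay between the two competitor classes: one must verify that enlarging from $W^{1,q}(\Omega;\mathbb R^3)$ to the natural variable-exponent class underlying $\mathcal A$ neither decreases the relaxed energy — because every finite-energy sequence in $\mathcal A$ is still weakly $W^{1,p}$-convergent with vanishing transverse gradient, which is exactly where \eqref{p}, \eqref{q} and $p\leq q$ enter the lower bound — nor increases it, since the $W^{1,q}$ recovery sequences constructed in the proof of Proposition \ref{3D2DOgden} already lie in $\mathcal A$. All remaining ingredients, namely the coercivity estimates, Fubini's theorem, and the compact Sobolev and $BV$ embeddings, are routine.
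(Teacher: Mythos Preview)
Your lower bound is the same argument as the paper's: minorise $f_\varepsilon$ by $Q\widehat f$ via Lemma~\ref{QCX0Properties}, then apply Ioffe's theorem together with the lower semicontinuity of the total variation and the inequality $|D_\varepsilon\chi_\varepsilon|(\Omega)\geq|D_\alpha\chi_\varepsilon|(\Omega)$.

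Your upper bound, however, takes a genuinely different route. You observe that the $W^{1,q}$ recovery sequences produced in Proposition~\ref{3D2DOgden} are, thanks to the compact embeddings $W^{1,p}(\Omega)\hookrightarrow L^1(\Omega)$ and $BV(\Omega)\hookrightarrow L^1(\Omega)$, automatically admissible for $\overline{\mathcal F}$, whence $\overline{\mathcal F}(\chi,u)\leq\mathcal F^{DR}(\chi,u)$ and the result follows by quoting Proposition~\ref{3D2DOgden} directly. This is correct and short; the verification that the hypotheses of Proposition~\ref{3D2DOgden} are met (in particular that $u\in u_0+W^{1,p}_0(\omega;\mathbb R^3)$ with $u\in W^{1,q}(\omega\setminus\overline E;\mathbb R^3)$ follows from the variable-exponent assumption, using that $\omega$ is Lipschitz so that $W^{1,1}_0$ plus $\nabla(u-u_0)\in L^p$ forces $u-u_0\in W^{1,p}_0$) is routine as you indicate. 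The paper instead keeps $\chi_\varepsilon\equiv\chi$ fixed, invokes the Coscia--Mucci relaxation result \cite[Corollary~6.3]{CM} to replace $f_\varepsilon$ by $Q(f_\varepsilon)$ in the fixed-$\chi$ functional $\overline{\mathcal F}_\chi$, and then builds the recovery sequence $\eta_\varepsilon(x)=u(x_\alpha)+\varepsilon x_3\psi(x_\alpha)$ by hand in the Le~Dret--Raoult style. Your reduction is more economical and leverages work already done; the paper's argument is more self-contained within the variable-exponent framework and, by working with fixed $\chi$, shows the slightly sharper fact that the upper bound is attained without perturbing the design region at all.
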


We point out that the functional $\overline{\mathcal F}$ in \eqref{FDRCM} is defined by means of an asymptotic process which can be rigorously treated in the framework of $\Gamma$-convergence in Sobolev spaces with variable exponents, making use of the results proven in \cite{CM}. 
We observe that due to \eqref{q} and \eqref{p}, the strong  $L^1$ convergence of the admissible sequences $\{(\chi_\varepsilon, u_\varepsilon)\}$ in $\mathcal A$  towards $(\chi,u)$ as in the statement, can be replaced by $BV$-weak*$\times W^{1,p}$-weak convergence of sequences for which the functional in \eqref{FDRCM} is finite.

In the proof of Proposition~\ref{3D2DOgdenCM} we will make use of an analogue of Lemma \ref{BFMbendingLemma2.3}, which allows us to replace $f_\eps$ by $Q(f_\eps)$ in \eqref{FDRCM}. Although the sequences and the convergences are taken in a different setting, we will see that the result remains true.

In what follows it will also be useful to keep in mind that \eqref{Qf0=} holds.

\begin{proof}
	[Proof of Proposition \ref{3D2DOgdenCM}]
	The proof of \eqref{repFCM} is obtained by showing a double inequality.
	For what concerns the lower bound, it suffices to observe that 
	$$Q\widehat f(b, \xi_\alpha) \leq Qf(b,\xi_\alpha,\xi_3), \;\forall b \in \{0,1\}, \forall \xi = (\xi_\alpha, \xi_3) \in \Rb^{3 \times 3},$$ and 
	$Q\widehat{W_0}$ and $Q\widehat{W_1}$ satisfy \eqref{q}, \eqref{p} and, in addition, \eqref{QfQWi} holds
	for every $(b,\xi_\alpha)\in\{0,1\}\times \mathbb R^{3 \times 2}$.
	
	Moreover, the same arguments adopted in the proof of Proposition \ref{3D2DOgden} provide the lower semicontinuity of the functional 
	$$\int_\Omega Q\widehat f(\chi(x_\alpha, x_3),\nabla_\alpha u (x_\alpha, x_3)) \, dx$$ 
	 with respect to  $L^1$ strong convergence. Indeed, due to the convexity of $Q\widehat f$ in the second variable, it suffices, once again, to invoke  Theorem \ref{thm2.4ABFvariant}.
	This, in addition to the lower semicontinuity of the total variation and the superadditivity of the limit inf, yields the lower bound.

In order to prove the upper bound, we observe that 
$\overline{\mathcal F}$ in \eqref{FDRCM} is bounded from above by the functional defined below, where in the admissible sequences we work with a fixed 
$\chi \in BV(\omega;\{0,1\})$, the characteristic function of a set 
$E \subset \subset \omega$ as in the statement,
\begin{equation}\label{FDRCMchi}
\overline{\mathcal F}_{\chi}(\chi,u):=\inf\left\{\liminf_{\varepsilon \to 0^+}
\left[\int_{\Omega}f_\eps(\chi(x),\nabla u_\varepsilon(x)) \,dx 
+ \left|D_\eps \chi\right|(\Omega)\right]: 
(\chi, u_\varepsilon)\in {\mathcal A}, u_\eps \equiv u_0 \hbox{ on } \partial \omega \times (0,1)\right\}.
\end{equation}
Arguing as  
in \cite[Corollary 1.3]{BFMbending}, we observe that the relaxation procedure leads to the same result both when considering sequences clamped on the lateral boundary and without prescribed lateral boundary datum. On the other hand, we show that
replacing $f_\eps$ by $Q(f_\eps)$ leads to the same functional. Indeed, denoting by 
$\overline{\mathcal F}_{\chi, Q(f_\eps)}$ the functional as in \eqref{FDRCMchi} with $f_\eps$ replaced by $Q(f_\eps)$, it is clear that
$$\overline{\mathcal F}_{\chi}(\chi, u)
 \geq \overline{\mathcal F}_{\chi, Q(f_\eps)}(\chi, u).$$
On the other hand, in view of \cite[Corollary 6.3]{CM}, since both our sets $E$ and $\omega \setminus \overline E$ have Lipschitz boundary, the arguments used
in Lemma \ref{BFMbendingLemma2.3} to conclude  \eqref{recQf} and \eqref{doubleenergy}
ensure the existence of an admissible sequence $u_{\eps,k} \in W^{1,p}(\Omega;\Rb^3)$ satisfying the lateral boundary condition and such that, for every $\delta >0$,
\begin{equation*}
 	\overline{\mathcal F}_{\chi, Q(f_\eps)}(\chi, u)\geq \lim_{\varepsilon \to 0^+}\lim_{k\to +\infty} \left [
 	\int_{\Omega}f_\eps\left(\chi(x),\nabla u_{\varepsilon,k}(x)\right) \, dx +\left|D_\eps \chi\right|(\Omega)\right] -\delta, 
 \end{equation*}		
 and
 $$\lim_{\varepsilon \to 0^+}\lim_{k\to +\infty}\|u_{\varepsilon, k}-u\|_{L^1(\Omega;\Rb^3)}=0.$$
 Then the same diagonalization argument as in Lemma \ref{BFMbendingLemma2.3}	allows us to conclude the equality 
 $$\overline{\mathcal F}_{\chi}(\chi, u)
 =\overline{\mathcal F}_{\chi, Q(f_\eps)}(\chi, u).$$
 
	Next, we will reason as in \cite{ld95}.  
	 Let
	$u \in u_0 +
	W_0^{1,{\chi p+ (1-\chi)q}}(\omega;\mathbb R^3)$.
	Given $\psi \in W^{1,p}_0(E)\cap W^{1,q}_0(\omega \setminus \overline E)$
define
	\begin{equation*}
	\eta_{\eps}(x_\alpha):= u(x_\alpha)+ \varepsilon x_3 \psi(x_\alpha), 
	\; \; x_\alpha\in \omega.
\end{equation*}
	Thus, by the chain of inequalities
$$
	\overline{\mathcal F}(\chi,u) \leq \overline{\mathcal F}_{\chi}(\chi, u)
	 =\overline{\mathcal F}_{\chi, Q(f_\eps)}(\chi, u),
	$$ 
	we have 
	\begin{align*}
		\overline{\mathcal F}(\chi,u) &\leq \liminf_{\eps \to 0^+}
		\left[\int_{\Omega}Qf\big(\chi(x),(\nabla_\alpha \eta_{\varepsilon}(x),\tfrac{1}{\eps}\nabla_3\eta _{\varepsilon}(x))\big)\, dx +
		|(D_\alpha\chi,\tfrac{1}{\eps}D_3\chi)|(\Omega)\right]\\
		& = \liminf_{\eps \to 0^+}
		\left[\int_{\Omega}Qf\big(\chi(x),(\nabla_\alpha u(x_\alpha)+\eps x_3\nabla_\alpha \psi(x_\alpha),\psi(x_\alpha))\big)\, dx +
		|D_\alpha\chi|(\Omega)\right]\\
		&\leq  \int_{\Omega}Qf\big(\chi(x),(\nabla_\alpha u(x_\alpha),\psi(x_\alpha))\big)\, dx + |D\chi|(\Omega)\\
		&= \int_{\omega}Qf\big(\chi(x_\alpha),(\nabla_\alpha u(x_\alpha),\psi(x_\alpha))\big)\, dx_\alpha + |D_\alpha\chi|(\omega),
		\end{align*}
	where the inequality on the third line is proved exploiting the $q$-Lipschitz continuity of $W_0$ and the $p$-Lipschitz continuity of $W_1$. Hence, given the arbitrariness of $\psi \in W^{1,p}_0(E)\cap W^{1,q}_0(\omega \setminus \overline E)$, we conclude that
	$$\overline{\mathcal F}(\chi,u)\leq 
	|D_\alpha \chi|(\omega)+\inf_{\psi \in W^{1,p}_0(E)\cap W^{1,q}_0(\omega \setminus \overline E)} \int_\omega Qf\big(\chi(x_\alpha),(\nabla_\alpha u(x_\alpha), \psi(x_\alpha))\big)\,dx_\alpha.$$
	Thus, the growth conditions \eqref{q}, \eqref{p} and a density argument show that
		\begin{align*}
		&\inf_{\psi \in W^{1,p}_0(E;\mathbb R^3)\cap W^{1,q}_0(\omega\setminus \overline E;\mathbb R^3)} \int_\omega Qf\big(\chi(x_\alpha),(\nabla_\alpha u(x_\alpha), \psi(x_\alpha))\big)\,dx_\alpha \\
		& \hspace{4cm}=
		\inf_{\psi \in L^p(\omega;\mathbb R^3)\cap L^q(\omega\setminus \overline E;\mathbb R^3)} \int_\omega Qf\big(\chi(x_\alpha),(\nabla_\alpha u(x_\alpha), \psi(x_\alpha))\big)\,dx_\alpha.
		\end{align*}

	Recalling the continuity and the coercivity of $Q f(b,\cdot)$, in each of the two terms present in \eqref{density}, and using Lemma \ref{QCX0Properties}, and equation \eqref{QfQWi},
	and the measurability criterion which provides the existence of $\bar\psi \in L^p(\omega;\mathbb R^3)\cap L^q(\omega \setminus \overline E;\mathbb R^3)$ such that
	$$
	Q\widehat f(\chi(x_\alpha),\nabla_\alpha u(x_\alpha))=\widehat{Qf}(\chi(x_\alpha),\nabla_\alpha u(x_\alpha))=Qf\big(\chi(x_\alpha),
	(\nabla_\alpha u(x_\alpha),\bar{\psi}(x_\alpha))\big),
	$$
	it follows that 
	$$
	\overline{\mathcal F}(\chi,u)\leq  |D_\alpha \chi|(\omega)+ \int_\omega Q\widehat f(\chi(x_\alpha),\nabla_\alpha u(x_\alpha)) \,dx_\alpha,
	$$
	which completes the proof. 
\end{proof}

Neglecting lateral boundary conditions and compactness arguments, we observe that the above result holds true for the weak convergence in 
$W^{\chi(x) p+(1-\chi(x)) q}_{\rm loc}(\Omega;\mathbb R^3)$, relaxing the conditions on $E$, for example the connectedness, requiring simply that $E$ and $\omega\setminus \overline E$ provide a partition of $\omega$ in the sense of \cite[Theorem 6.1]{CM}.
In the same spirit of {\bf 3)} Remarks \ref{otherhyp}, we point out that the above result remains valid if we replace \eqref{Qffastast} with the assumption that $Q\widehat f$ is closed $W^{1,p}$-quasiconvex. Indeed, the proof of the upper bound required no convexity hypothesis on $f$ and to conclude the lower bound it would suffice to apply \cite[Proposition 4.7]{BZ} to $Q\widehat f$.

\medskip

\noindent\textbf{Acknowledgements}.
The incentive to pursue the topic of this paper arose during the International Conference on Elliptic and Parabolic Problems 2019. The authors would like to thank G. Dal Maso and S. Kr\"omer for having proposed it and for discussions on this subject, as well as the anonymous referees for their careful reading of the manuscript.
We also thank CMAFcIO at the Universidade de 
Lisboa and Dipartimento di Ingegneria Industriale at the Universit\`a degli Studi di Salerno (which the second author was affiliated with during the course of this research), where this 
research was carried out, and gratefully acknowledge the support
of INdAM GNAMPA, Programma Professori Visitatori 2019. The research of ACB was partially supported by National Funding from FCT -
Funda\c c\~ao para a Ci\^encia e a Tecnologia through project UIDB/04561/2020.
EZ is a member of INdAM GNAMPA, whose support is gratefully acknowledged.

\end{document}